\setlist{nolistsep}
\newcommand{\salg}[1]{\mathfrak {#1}}
\renewcommand{\ker}{\operatorname{ker}}
\newcommand{\im}{\operatorname{im}}
\newcommand{\End}{\operatorname{End}}
\newcommand{\Hom}{\operatorname{Hom}}
\newcommand{\Ext}{\operatorname{Ext}}
\newcommand{\Ob}{\operatorname{Ob}}
\newcommand{\cat}[1]{\mathbf{#1}}
\newcommand{\Sets}{\cat{Sets}}
\newcommand{\Mod}{\cat{Mod}}
\newcommand{\MeasSets}{\cat{Meas}}
\newcommand{\supp}{\operatorname{supp}}
\def \d{\mbox{\(\,\mathrm{d}\)}}
\renewcommand{\epsilon}{\varepsilon}
\newcommand{\norm}[1]{\left\|#1\right\|}
\newcommand{\sm}{\setminus}
\newcommand{\Rr}{\mathbb{R}}
\newcommand{\Nn}{\mathbb{N}}
\newcommand{\Zz}{\mathbb{Z}}
\newcommand{\Ff}{\mathbb{F}}
\newcommand{\set}[2]{\{\,#1 \, : \, #2\,\} }
\newcommand{\bigset}[2]{\left\{\,#1 \, : \, #2\,\right\} }
\newcommand{\qbinom}[2]{ {#1 \brack #2}_q }
\newcommand{\fwbinom}[2]{ {#1 \brace #2} }
\newcommand{\Sring}{\mathcal{A}}
\newcommand{\Sinf}{\cat{S}}
\newcommand{\Smon}{\mathcal S}
\newcommand{\Us}{\mathbf{1}}
\newcommand{\Exp}{\operatorname{Exp}}
\newcommand{\sheaf}[1]{\mathcal{#1}}
\newcommand{\keyt}[1]{\emph{#1}}
\newtheorem{theorem}{Theorem}
\newtheorem{lem}{Lemma}
\newtheorem{prop}{Proposition}
\theoremstyle{definition}
\newtheorem{defi}{Definition}
\newtheorem{ex}[theorem]{Example}
\theoremstyle{remark}
\newtheorem{remark}{Remark}
\begin{document}

\title[Homological characterization of  multinomial coefficients]{A homological characterization of generalized multinomial coefficients related to the entropic chain rule}


\author{Juan Pablo Vigneaux}
\address{Institut de Math\'ematiques de Jussieu-Paris Rive Gauche (IMJ-PRG).
Universit\'e de Paris, Sorbonne Universit\'e \& CNRS. 
 F-75013, Paris, France.}
\curraddr{Max Planck Institute for Mathematics in the Sciences, Inselstraße 22, 04103 Leipzig, Germany.}
\email{vigneaux@mis.mpg.de}
\thanks{The work presented here was part of my doctoral dissertation. I want to thank my Ph.D. advisor, Prof. Daniel Bennequin, for his support.}

\subjclass[2010]{05A10, 05A16,  39B22, 18G60, 94A17} 

\keywords{Multinomial coefficients, entropy, information cohomology, chain rule, fundamental equation of information theory }

\date{\today}

\dedicatory{}

\begin{abstract}
There is an asymptotic relationship between the multiplicative relations among multinomial coefficients and the (additive) recurrence property of Shannon entropy known as the chain rule. We show that both types of identities are manifestations of a unique algebraic construction: a $1$-cocycle condition in \emph{information cohomology}, an algebraic invariant of phesheaves of modules on \emph{information structures} (categories of observables). Baudot and Bennequin introduced this cohomology and  proved that Shannon entropy represents the only nontrivial cohomology class in degree $1$  when the coefficients are a natural presheaf of probabilistic functionals. The author obtained later a $1$-parameter family of deformations of that presheaf, in such a way that each Tsallis $\alpha$-entropy appears as the unique $1$-cocycle associated to the parameter $\alpha$. In this article, we introduce a new presheaf of \emph{combinatorial functionals}, which are measurable functions of finite arrays of integers; these arrays represent \emph{histograms} associated to random experiments. In this case, the only cohomology class in degree $0$ is generated by the exponential function and $1$-cocycles are Fontené-Ward generalized multinomial coefficients.
As a byproduct, we get a simple combinatorial analogue of the fundamental equation of information theory that characterizes the generalized binomial coefficients. The asymptotic relationship mentioned above is extended to a correspondence between certain generalized multinomial coefficients and any $\alpha$-entropy, that sheds new light on the meaning of the chain rule and its deformations.
\end{abstract}

\maketitle
\tableofcontents

\section{Motivations and main results}\label{sec:intro}

It is well known that the multinomial coefficients\footnote{For integers  $n,k_1,...,k_s$ such that $\sum_{i=1}^s k_i = n$, one has $$ {n\choose k_1,...,k_s} := \frac{n!}{k_1! \cdots k_s!}.$$ This expression counts words $w\in \{a_1,...,a_s\}^n$ where the symbol $a_i$ appears $k_i$ times, for each $i$.} are asymptotically related to Shannon entropy: if $(p_1,...,p_s)$ is a probability vector and $n\in \Nn$, 
\begin{equation}
{n \choose p_1n,...,p_sn} := \frac{\Gamma(n+1)}{\Gamma(p_1n+1) \cdots \Gamma(p_sn+1)}= \exp(n S_1(p_1,...,p_s)+o(n))
\end{equation}
where $S_1$, or more precisely $S_1^{(s)}$, denotes Shannon entropy in nats,
\begin{equation}\label{eq:Shannon_entropy}
S_1(p_1,...,p_s) := -\sum_{i=1}^s p_i \ln p_i.
\end{equation}
The fact that the multiplicative relations between these coefficients 
 translate asymptotically into the entropic chain rule is however never mentioned. For instance, from the  \emph{combinatorial} identity
 \begin{equation}\label{intro:recurrence-multinomial-3}
 {n\choose p_1n,p_2n,p_3n} = {n \choose (p_1+p_2)n,p_3n}{(p_1+p_2)n \choose p_1n,p_2n},
 \end{equation}
 one  can  deduce---taking the logarithm of both sides, normalizing by $n$ and then letting $n\to \infty$---that
 \begin{equation}\label{eq_ex_entropic_chain_rule}
 S_1(p_1,p_2,p_3) = S_1(p_1+p_2, p_3) + (p_1+p_2)S_1\left(\frac{p_1}{p_1+p_2},\frac{p_2}{p_1+p_2}\right).
 \end{equation}
 Both equalities are  induced by a grouping of the arguments (``coarse graining''), which can be represented as a surjection from $\{1,2,3\}$ to $\{1,2\}$ that maps $1$ to $1$, $2$ to $1$, and $3$ to $2$. 
 
 The additive relations exemplified by \eqref{eq_ex_entropic_chain_rule}, known in information theory as the \emph{chain rule}, serve as a fundamental property to algebraically characterize the entropy. Let us denote by $\Delta^n$ the standard simplex $\set{(x_0,...,x_n)\in \Rr^n}{\sum_{i=0}^n x_i = 1}$, and $[n]$ the set $\{1,...,n\}$.  Shannon \cite{Shannon1948}  proved that $\{S_1^{(n+1)}:\Delta^n\to \Rr\}_{n\in \Nn}$ are the only continuous functions (up to a multiplicative constant) that vanish on the vertexes of each simplex $\Delta^n$, make $S_1^{(n)}(1/n,...,1/n)$ monotonic in $n$, and satisfy the chain rule induced by any surjection $[n]\to [m]$.  It is natural to ask if the multinomial coefficients can be algebraically characterized in an analogous way. 
 
 The equality \eqref{eq_ex_entropic_chain_rule}, together with the symmetry of the entropy, imply that $$s_1(x) := S^{(2)}_1(x,1-x)=-x \ln x -(1-x)\ln(1-x)$$ is a solution of the so-called \emph{fundamental equation of information theory (FEITH)}:
\begin{multline}\label{eq:FEITH}
\forall x,y\in [0,1) \text{ such that } x+y \leq 1, \quad\\ u(x) + (1-x)u \left(\frac{y}{1-x} \right) =  u(y) + (1-y)u \left(\frac{x}{1-y} \right).
\end{multline} 
 This functional equation was first introduced by Tverberg \cite{Tverberg1958}, who proved that every integrable and symmetric solution of \eqref{eq:FEITH} is a multiple of  $s_1(x)$. The regularity condition can be weakened to mere measurability \cite{Kannappan1973}. Tverberg's result gives an alternative algebraic characterization of Shannon entropy. Furthermore, the fundamental equation is also relevant in other areas of mathematics: it appears in Cathelineau's computations of the degree-one homology of $SL_2$ over a field of characteristic zero with coefficients in the adjoint action \cite{Cathelineau1988}, as well as in subsequent work by Elbaz-Vincent and Gangl \cite{Elbaz2002,Elbaz2015} and Bloch and Esnault \cite{Bloch2003} connected to polylogarithms and motives.  Kontsevich \cite{Kontsevich1995} used a version of the FEITH to introduce the entropy modulo $p$ and also gave a cohomological interpretation of this functional equation. 
 
 The algebraic characterizations of entropy already mentioned accept a $1$-parameter family of deformations. For any  $\alpha>0$, define  $S_\alpha \equiv S_\alpha^{(s)}:\Delta^{s-1}\to \Rr$ by the formula
 \begin{equation}
 S_\alpha(p_1,...,p_s) := \frac{1}{1-\alpha} \sum_{i=1}^s p_i^\alpha -1,
 \end{equation}
 in such a way that $S_\alpha \to S_1$ when $\alpha\to 1$. 
 This function was first introduced by Havrda-Charv\'{a}t \cite{Havrda1967} as \emph{structural $\alpha$-entropy} and nowadays it is mostly known as Tsallis $\alpha$-entropy. It satisfies a deformed chain rule where the weights in front of each term are raised to the power $\alpha$, e.g.
  \begin{equation}\label{eq_ex_entropic_chain_rule_deformed}
 S_\alpha(p_1,p_2,p_3) = S_\alpha(p_1+p_2, p_3) + (p_1+p_2)^\alpha S_\alpha\left(\frac{p_1}{p_1+p_2},\frac{p_2}{p_1+p_2}\right).
 \end{equation}
 In \cite{Havrda1967}, this property plays a fundamental role in an algebraic characterization of $S_\alpha$ (up  to a multiplicative constant) analogue to Shannon's characterization of $S_1$. Along the same line, Dar\'{o}czy introduced a generalized FEITH,
 \begin{multline}\label{eq:FEITH-def}
\forall x,y\in [0,1) \text{ such that } x+y \leq 1, \quad\\ u(x) + (1-x)^\alpha u \left(\frac{y}{1-x} \right) =  u(y) + (1-y)^\alpha u \left(\frac{x}{1-y} \right).
\end{multline} 
with boundary condition $u(0)=u(1)$, and proved that its only solutions are multiples of 
$$s_\alpha (x)= \frac{1}{1-\alpha} (x^\alpha + (1-x)^\alpha -1),$$
without any hypothesis on the regularity of $u$. 

 Up to this point, there is no general \emph{combinatorial} counterpart to the entropies $S_\alpha$ and their chain rule; the latter could be judged as a purely formal rule, without further implications. However, we showed in a previous article \cite{Vigneaux2019} that the Gaussian $q$-multinomial coefficients are asymptotically related to the $2$-entropy, giving a concrete combinatorial meaning to the deformed chain rule (for a precise statement, see the examples after Proposition \ref{prop:asymptotic-relation-n-cocycles}). Similar results may hold for other combinatorial quantities and other values of $\alpha$. 

At the algebraic level, there is more than an ``analogy'' between the multiplicative relations among multinomial coefficients and the entropic chain rule: we establish in this article that both are particular cases of a general construction called \emph{information cohomology}. This theory was first introduced by Baudot and Bennequin in \cite{Baudot2015} and further developed by the author in \cite{Vigneaux2019-thesis}. These works prove that each entropy $S_\alpha$, for $\alpha>0$, is the unique $1$-cocycle in information cohomology with coefficients in certain module of probabilistic functionals $\sheaf F_\alpha$; the $1$-cocycle condition corresponds in this case to the chain rule---exemplified by \eqref{eq_ex_entropic_chain_rule_deformed}---for certain restricted family of surjections encoded by an \emph{information structure} (a categorical object defined from a given collection of random variables).  This result does not require assumptions like the symmetry of $S_\alpha$ under permutations or its asymptotic behavior.  The construction is summarized in Section \ref{sec:info_structures}.

In Section \ref{sec:counting_functions}, we introduce a new module of coefficients $\sheaf G$ made of ``combinatorial'' functionals, and show that $1$-cocycles are in this case \emph{Fonten\'e-Ward generalized  multinomial coefficients}: given any sequence  $D=\{D_i\}_{i\geq 1}$ such that $D_1=1$, these coefficients are defined for any integers $k_1,...,k_s\in \Nn$ by
\begin{equation}\label{intro:fwcoeff}
\fwbinom{n}{k_1,...,k_s}_D:= \frac{[n]_D!}{[k_1]_D!\cdots[k_s]_D!},
\end{equation}
where $[n]_D!:=D_nD_{n-1}\cdots D_1$, $[0]_D!:=1$, and $n=\sum_{i=1}^s = k_i$.
Again, the $1$-cocycle condition implies all the multiplicative relations akin to \eqref{intro:recurrence-multinomial-3} for a given family of surjections encoded by the information structure. 

The generalized binomial coefficients were first introduced by Fontené in 1915 \cite{Fontene1915}, and later rediscovered by Ward \cite{Ward1936}, who developed a ``calculus of sequences'' analogue to the quantum calculus introduced by Jackson.  The multinomial case was already treated by  Gould \cite{Gould1969}.\footnote{Already Fonten\'e \cite{Fontene1915}, in 1915, noted that $\fwbinom{n}{k}_D :=\fwbinom{n}{k,n-k}_D$ verifies the \emph{additive} recurrence formula
\begin{equation}\label{eq:pascal}
\fwbinom{n}{k}_D -\fwbinom{n-1}{k}_D = \fwbinom{n-1}{k-1}_D \frac{D_n-D_{n-k}}{D_k},
\end{equation}
with boundary conditions $\fwbinom{n}{0}_D={\fwbinom nn}_D = 1$ for $n\geq 0$. Hence, for each sequence $D$,  the corresponding $D$-binomial coefficients are associated to certain  Pascal triangle defined in terms of $D$, see \cite[p.~25]{Gould1969}. } To our knowledge, three particular cases appear in the literature under their own name:
\begin{enumerate}
\item $D_n = n$ gives the usual multinomial coefficients.
\item $D_n = (q^n-1)/(q-1)$ gives the Gaussian $q$-multinomial coefficients, usually denoted $\qbinom{n}{k_1,...,k_s}$. See \cite{Vigneaux2019}.
\item When $D$ is the Fibonacci sequence and $s=2$, the expressions \eqref{intro:fwcoeff} are called Fibonomial coefficients.
\end{enumerate}

 The functions $f_D(\nu_1,\nu_2)= \fwbinom{\nu_1+\nu_2}{\nu_1,\nu_2}_D$ are the only solutions of  the functional equation
 \begin{equation}\label{eq:comb_FEITH_intro}
 \forall (\nu_0,\nu_1,\nu_2) \in \Nn^3 \sm \{(0,0,0)\}, \quad 
 \frac{f(\nu_0+\nu_1, \nu_2)}{f(\nu_0,\nu_2)}= \frac{f(\nu_1, \nu_0+\nu_2)}{f(\nu_1,\nu_0)}.
 \end{equation}
 This equation can be seen as a combinatorial version of the FEITH in view of the parallelism between  Proposition \ref{prop:func_eq} below and \cite[Prop.~3.10]{Vigneaux2019-thesis}.
%

We also prove that, for every $\alpha>0$, there is a generalized multinomial coefficient asymptotically related to the corresponding $\alpha$-entropy.  In fact, if $D_n^\alpha= \exp\{K(n^{\alpha-1}-1)\}$, for some $K\in \Rr$, then
\begin{equation}
\fwbinom{n}{p_1n,...,p_sn}_{D^\alpha}  = \exp\left\{n^\alpha \frac{K}{\alpha} S_\alpha(p_1,...,p_s)+o(n^\alpha)\right\}.
\end{equation}

Since the Fonten\'e-Ward multinomial coefficients  satisfy the same multiplicative relations as the usual multinomial coefficients, their logarithms (properly normalized) are connected in the limit $n\to \infty$ to the deformed chain rule \eqref{eq_ex_entropic_chain_rule_deformed}, as we already showed for the particular case $D_n = n$, which correspond to  $\alpha = 1$ and Shannon entropy. This gives an asymptotic correspondence between some of these new combinatorial $1$-cocycles and the old probabilistic $1$-cocycles, which is the subject of the last section.

\section{Information structures and their cohomology}\label{sec:info_structures}

An \emph{information structure} is a pair $(\Sinf, \sheaf E)$, made of a small category  $\Sinf$ and a functor  $\sheaf E:\Sinf\to \MeasSets_{surj}$, whose codomain is the category of measurable sets and measurable surjections between them.\footnote{In this article, boldface is associated to categories and caligraphic letters to functors. Given a functor $\sheaf F:\cat C\to \cat D$ and an object $X$ of $\cat C$, we denote by $\sheaf F_X$ the image of $X$ under $\sheaf F$ wherever is is possible---instead of the traditional notation $\sheaf F(X)$---to avoid excessive parentheses.} We denote by $(E_X, \salg B_X)$ the image of an object $X$ under $\sheaf E$.   The category $\Sinf$ is supposed 
\begin{enumerate}
\item to be a partially ordered set (poset): given any two objects $A$ and $B$ of $\Sinf$, there is at most one arrow from $A$ to $B$, and if $A\to B$ and $B\to A$, then $A=B$ (strict equality);
\item to have a terminal object $\Us$, and
\item to be ``conditionally cartesian'': for any diagram $X  \leftarrow Z \rightarrow Y$ in $\Sinf$, the categorical product $X\wedge Y$ exists.
\end{enumerate} 
In turn, the functor $\sheaf E$ is conservative (it does not turn nonidentity arrows into isomorphisms) and satisfies:
\begin{enumerate}
\item $E_\Us\cong \{\ast\}$,
\item for all $X\in \Ob \Sinf$, the $\sigma$-algebra $\salg B_X$ contains all the singletons $\{x\}\subset E_X$,  and
\item for every diagram 
 $
 \begin{tikzcd}
  X & X\wedge Y \ar[l,swap, "\pi"] \ar[r, "\sigma"] & Y 
 \end{tikzcd}
 $ in $\Sinf$, 
 the measurable map $$E_{X\wedge Y} \rightarrow E_X\times E_Y, \: z\mapsto (x(z),y(z)):= (\sheaf E\pi(z),\sheaf E \sigma(z))$$ is an injection.
\end{enumerate}

Information structures are combinatorial objects that accept a probabilistic interpretation, under which the objects of $\Sinf$, denoted $X,Y,Z,...$, represent random variables, and the functor $\sheaf E$ represents the possible outcomes of each variable. For any $X\in \Ob \Sinf$ and $A\in \salg B_X$, there is an \emph{event} $\{X\in A\}$. The arrows $\pi:X\to Y$ in $\Sinf$  correspond to the notion of refinement, which is implemented by the measurable map $\sheaf E\pi: \sheaf E_X \to \sheaf E_Y$: the event $\{Y\in A\}$ can also be defined in terms of $X$, as $\{X\in \sheaf E\pi^{-1}(A)\}$. The product $X\wedge Y$ represents the joint measurement of $X$ and $Y$, and the event $\{X\wedge Y = z\}$  gives an interpretation to the probabilistic notation $\{X= x(z), Y=y(z)\}$. 

There is an appropriate notion of morphism between information structures. This and some properties of the corresponding category are treated in \cite{Vigneaux2019-thesis}.

The information structure is finite if each set $E_X$ is finite; in this case, the algebra $\salg B_X$ is necessarily the atomic $\sigma$-algebra and can be omitted from the notation.  A treatment of the infinite case for gaussian random variables can be found in \cite{Vigneaux2019-thesis}.

Given an information structure $(\Sinf,   \sheaf E)$, one can define a presheaf (i.e. a contravariant functor) of monoids that maps $X\in \Ob \Sinf$ to the the set $\Smon_X  := \set{Y\in \Ob \Sinf}{X\to Y}$ equipped with the product  $(Y,Z) \mapsto YZ:= Y\wedge Z$;  an arrow  $X\to Y$ is mapped to the inclusion $\Smon_Y \hookrightarrow \Smon_X$. The associated presheaf of induced algebras   $X\mapsto  \Rr[\Smon_X]$ is denoted by $\Sring$. 

More generally, \emph{presheaves of sets} on $\Sinf$ are functors $\sheaf H: \Sinf^{op}\to \Sets$; a morphism $\phi:\sheaf H \to \sheaf K$ between presheaves is a natural transformation: a collection of mappings $\{\phi_X: \sheaf H_X \to \sheaf K_X\}$ such that, for every $\pi:X\to Y$ in $\Sinf$, the diagram
\begin{equation}\label{eq:naturality_nattrans}
\begin{tikzcd}
\sheaf H_Y \ar[r, "\phi_Y"] \ar[d, "\sheaf H \pi"] & \sheaf K_X \ar[d, "\sheaf K \pi"]  \\ 
\sheaf H_X \ar[r, "\phi_X"]  & \sheaf K_X
\end{tikzcd}
\end{equation}
in $\Sets$ commutes. One obtains in this way a category $\widehat S$, which is a basic example of a Grothendieck topos, see \cite{MacLane1994}. The product between two presheaves $\sheaf H$ and $\sheaf K$ is the presheaf that associates to each $X\in \Ob \Sinf$ the set $\sheaf H_X \times \sheaf K_X$ and to each arrow $\pi$ the map $\sheaf H\pi \times \sheaf K\pi$.

A \emph{ presheaf of $\Sring$-modules} is a presheaf of sets $\sheaf M$ together with a morphism $\phi:\sheaf A  \times \sheaf M \to \sheaf M$ such that, for every $X\in \Ob \Sinf$, the set $\sheaf M_X$ is an abelian group and $\phi_X: \sheaf A_X \times \sheaf M_X \to \sheaf M_X$ defines a structure of $\Sring_X$-module on $\sheaf M_X$.\footnote{After this paragraph, this mapping $\phi$ is always implicit: instead of $\phi(a,m)$, we write $a.m$.}  A morphism $\psi:\sheaf M\to \sheaf N$ between sheaves of $\Sring$-modules $(\sheaf M,\phi^{\sheaf M})$ and $(\sheaf N, \phi^{\sheaf N})$ is a morphism of presheaves $\psi:\sheaf M\to \sheaf N$ such that, for every $X\in \Ob \Sinf$, the mapping   $\psi_X$ is linear, and the diagram of presheaves
\begin{equation}\label{eq:naturality_modules}
\begin{tikzcd}
\Sring\times \sheaf M \ar[r, "\phi^{\sheaf M}"] \ar[d, "1\times \psi"] & \sheaf M \ar[d, "\psi"] \\
\Sring\times \sheaf N \ar[r,"\phi^{\sheaf N}"] & \sheaf N
\end{tikzcd}
\end{equation}
 commutes.  The set of $\Sring$-module morphisms from $\sheaf M$ to $\sheaf N$ is denoted by $\Hom_{\Sring}(\sheaf M,\sheaf N)$. Sheaves of ${\Sring}$-modules and its morphisms form the category $\Mod({\Sring})$.

\emph{Information cohomology} is a geometrical invariant associated to presheaves of $\Sring$-modules. It can be explicitly introduced as follows. 

First, for each $X\in \Ob \Sinf$, define  $\sheaf B_0(X)$ as the free $\sheaf A_X$-module  generated by the empty symbol $[\:]$, and $\sheaf B_n(X)$ as the  free $\sheaf A_X$-module  generated by $$\set{[X_1|\cdots|X_n]}{X_1,...,X_n\in \Smon_X}.$$ For every arrow $\pi:X\to Y$, there is an obvious inclusion $\sheaf B\pi: \sheaf B_i(Y) \hookrightarrow \sheaf B_i(X)$, for any $i\geq 0$; in fact, each $\sheaf B_i$ is a presheaf of $\sheaf A$-modules. 

Let $\Rr_{\Sinf}$ denote the constant sheaf, which associates to every object $X$ the vector space $\Rr$ with trivial $\Smon$-action and to every morphism the identity map. The presheaves $\{\sheaf B_i\}_{i\in \Nn}$ introduced above form a \emph{resolution} of $\Rr_\Sinf$, which means that there is a  diagram of presheaves
 \begin{equation}\label{eq:bar_resolution}
\begin{tikzcd}
0 
& \Rr_\Sinf \ar[l] 
& \sheaf B_0            \ar[l, "\epsilon", swap]  
& \sheaf B_1            \ar[l, "\partial_1", swap] 
& \sheaf B_2            \ar[l, "\partial_2", swap] 
& ...\ar[l, "\partial_3", swap] 
\end{tikzcd}
\end{equation}
such that $\im\partial_i = \ker \partial_{i-1}$ and $\im\partial_1 = \ker \epsilon$. These morphisms are defined on generators by the formulae 
$\epsilon([\,])= 1$, and 
\begin{multline}
\partial_{n} ([X_1|...|X_n]) = X_1[X_2|...|X_n] +\\ \sum_{k=1}^{n-1} (-1)^k [X_1|...|X_kX_{k+1}|...|X_n] +(-1)^n[X_1|...|X_{n-1}].
\end{multline}

Given any presheaf $\sheaf M$, we get a differential complex 
\begin{equation}\label{eq:diff_complex}
\begin{tikzcd}[column sep=small]
0 \ar[r]
& \Hom_{\Sring}(\Rr_\Sinf,   \sheaf  M) \ar[r] 
&  C^0(\sheaf M)          \ar[r, "\delta^0"]  
&  C^1(\sheaf M)             \ar[r, "\delta^1"]
&  C^2(\sheaf M)           \ar[r, "\delta_2"]
& ...
\end{tikzcd},
\end{equation}
\normalsize
where $C^n( \sheaf M)$ denotes $\Hom_{\Sring}(\sheaf   B_n,   \sheaf M)$ and each morphism $\delta^i:  C^i(\sheaf M) \to  C^{i+1}(\sheaf M)$ is given by the formula $\delta^i (\phi) := \phi \circ \partial_{i+1}:\sheaf B_{i+1}\to \sheaf M$. In general, this complex  is not exact, but  $\delta^{i+1}\circ \delta^i = 0$ still holds for every $i\in \Nn$. 

The \emph{information cohomology} of $\Sinf$ with coefficients in $\sheaf M$, denoted $H^\bullet(\Sinf, \sheaf M)$, is the cohomology of the differential complex \eqref{eq:diff_complex}, this is, 
 \begin{equation}
  H^0(\Sinf,\sheaf M):=\ker \delta^0 \quad \text{and} \quad H^n(\Sinf,\sheaf M) := \ker \delta^n/\im\delta^{n-1} \text{ when } n\in \Nn^*.
  \end{equation} 
 The elements of $C^n(\sheaf M)$ are called $n$-cochains: they are $n$-cocycles when they belong to $Z^n(\sheaf M):= \ker \delta^n$, and $n$-coboundaries when they belong to $\delta^n C^{n-1}$. We omit the superindex of $\delta$ if it is clear from context. Every $n$-coboundary is an $n$-cocycle, but the converse is not true. An $n$-cochain $\phi$ is by definition a collection $\{\phi_X:\sheaf B_n(X) \to \sheaf M_X\}$ of $\sheaf A_X$-equivariant mappings, see \eqref{eq:naturality_modules}. Therefore, it is enough to determine the image $\phi_X([X_1|\cdots|X_n])$ of each generator $[X_1|\cdots|X_n]$ of $\sheaf B_n(X)$; to simplify notation, we write  $\phi_X[X_1|\cdots|X_n]$. The naturality with respect to $X$---this is, the commutativity of \eqref{eq:naturality_nattrans}---translates into the following condition: for every arrow $\pi:X\to Y$ in $\Sinf$,
\begin{equation}\label{eq:joint_locality}
\phi_X[X_1|\cdots|X_n] = \sheaf M\pi(\phi_Y[X_1|\cdots|X_n])
\end{equation}
whenever $\{X_1,...,X_n\}\subset \Smon_Y \hookrightarrow \Smon_X$. Remark that any variable $Y$ that refines $X_1$, $X_2$,..., $X_n$, also refines their product $X_1\cdots X_n$; thus \eqref{eq:joint_locality} is equivalent to
\begin{equation}\label{eq:joint_locality2}
\phi_X[X_1|\cdots|X_n] = \sheaf M\rho(\phi_{X_1\cdots X_n}[X_1|\cdots|X_n])
\end{equation}
where $\rho$ is the arrow $X\to X_1\cdots X_n$. According to this equation, $\phi_X[X_1|\cdots|X_n] $ only depends on its ``localization'' at $X_1\cdots X_n$; consequently, we refer to \eqref{eq:joint_locality} or \eqref{eq:joint_locality2} as \emph{joint locality}.

\begin{remark}
The category of $\Sring$-modules is abelian and has enough injectives. Therefore, one can introduce cohomological $\partial$-functors in the sense of \cite{Grothendieck1957} (see also \cite{Weibel1994}). The functors $\{\Ext^i(\Rr_\Sinf,-)\}_{i\geq 0}$ are the right derived functors of $\Hom(\Rr_\Sinf,-)$. Information cohomology with coefficients $\sheaf M$ can be defined as $H^\bullet(\Sinf,\sheaf M) :=\Ext^\bullet(\Rr, \sheaf M)$. These cohomology groups are naturally isomorphic to those introduced above, because  it can be proved that each $\sheaf B_i$ is a projective object in  $\Mod(\Sring)$. For details, see \cite[Sec.~2.4]{Vigneaux2019-thesis}.
\end{remark}

We introduce now a concrete example related to probabilistic functionals studied in \cite{Baudot2015} and \cite{Vigneaux2019-thesis}. We assume from now on that $(\Sinf,\sheaf E)$ is a finite information structure.

Let $\sheaf P$ be the functor that associates to any $X\in \Ob\Sinf$ the set of probabilities 
\begin{equation}\label{eq:sheaf_probas}
\sheaf P_X := \bigset{p:E_X\to [0,1]}{\sum_{x\in E_X} p(x) = 1}, 
\end{equation}
and to each morphism $\pi:X\to Y $, the mapping $\sheaf P\pi:\sheaf P_X \to \sheaf P_Y$ given by 
\begin{equation}\label{eq:marginalization_probas}
\sheaf P\pi(p) (y) := \sum_{x\in \sheaf E\pi^{-1}(x)} p(x), 
\end{equation}
called \emph{marginalization}.  When $\pi$ is clear from context, we write $Y_*p$ instead of $\sheaf P\pi(p)$.

Given any probability $p\in \sheaf P_X$, an arrow $\pi:X\to Y$, and $y\in E_Y$ such that $Y_*p(y)\neq 0$, the \emph{conditional probability} $p|_{Y=y}:E_X\to [0,1]$ is given by
\begin{equation}
p|_{Y=y}(x) := \begin{cases}\frac{p(x)}{Y_*p(y)} & \text{if } x\in \sheaf E\pi^{-1}(y) \\ 0 & \text{otherwise}\end{cases}.
\end{equation}

Let $\sheaf F$ be the presheaf that associates to each $X\in \Ob \Sinf$ be the real vector space of measurable functions $f:\sheaf P_X \to \Rr$,\footnote{The set $\sheaf P_X$ can be naturally identified with the standard simplex $\Delta^{|E_X|-1}$, which equipped with its Borel $\sigma$-algebra is a measurable space.} and to each arrow $\pi:X\to Y$ in $\Sinf$, the map given by precomposition with the corresponding marginalization: $\sheaf F\pi (f) =f \circ \sheaf P \pi$. 

For any $\alpha>0$, we define an action of $\Smon_X$ on $\sheaf F_X$ as follows: for each $Y\in \Smon_X$, $f\in   \sheaf  F_X$ and $p\in   \sheaf P_X$,  
\begin{equation}
(Y.f)(p) = \sum_{\substack{y\in E_Y\\ Y_*P(y)\neq 0}} (Y_*p(y))^\alpha \phi(p|_{Y=y}).
\end{equation}
Extended linearly, this turns $\sheaf F_X$ into an $\Sring_X$-module. Since the action is natural,  we obtain an $\Sring$-module denoted $\sheaf F_\alpha$. See Proposition 3.1 and 3.2 in \cite{Vigneaux2019-thesis}.

We call $H^\bullet(\Sinf, \sheaf F_\alpha)$ \emph{probabilistic information cohomology}. Probabilistic $0$-cochains $\phi\in  C^0(\sheaf F_\alpha)$ are given by a collection of functions $\{\phi_X[\:] \in \sheaf F_\alpha(X)\}_X\in \Ob{\Sinf}$ that by joint locality must be constant: $\phi_X[\:](P_X)= \phi_\Us[\:](\Us_*P_X)=\phi_\Us[\:](\delta_{\ast})\in \Rr$. It is not difficult to see that $Z^0(\sheaf F_1) = C^0(\sheaf F_1)$, hence  $H^0(\Sinf,\sheaf F_1)\cong \Rr$, whereas $H^0(\Sinf,\sheaf F_\alpha)=0$ for every $\alpha\neq 1$. In turn, any probabilistic $1$-cochain $\phi\in C^1(\sheaf F_\alpha)$ satisfies $\phi_X[Z](p)=\phi_Z[Z](Z_*p)$ by joint locality \eqref{eq:joint_locality}. In fact, the collection of measurable functions $\{\phi[Z]:\sheaf P_Z\to \Rr\}_{Z\in\Ob\Sinf}$ defines the $1$-cochain. Hence each $\alpha$-entropy determines a $1$-cochain $S_\alpha \in C^1(\sheaf F_\beta)$, for any $\beta>0$, through the formulae
\begin{equation}
\forall p \in \sheaf P_X, \quad S_1[X](p) := -\sum_{x\in E_X} p(x) \ln p(x)
\end{equation}
and 
\begin{equation}\label{eq:tsallis-entropy-def}
\forall p \in \sheaf P_X, \quad S_\alpha[X](p)= \frac{1}{1-\alpha} \left(\sum_{x\in E_X} p(x)^\alpha -1\right),
\end{equation}
when  $\alpha \in (0,\infty)\sm\{1\}$. 

Moreover, $S_\alpha$ is a $1$-cocycle of type $\alpha$ i.e. an element of $Z^1(\sheaf F_\alpha)$. The cocycle condition $\delta S_\alpha = 0$ means that, for every $X\in \Ob\Sinf$ and every $Y,Z\in \Smon_X$, the equation
\begin{equation}
0 = (Y.S_\alpha)_X[Z]-(S_\alpha)_X[YZ]+(S_\alpha)_X[Y]
\end{equation}
holds, and this corresponds exactly to the $\alpha$-chain rule, cf. \eqref{eq_ex_entropic_chain_rule_deformed}.\footnote{Information theorists would write $H(Y,Z) = H(Y) + H(Z|Y)$ in the case of Shannon entropy.} Conversely, the equation $0=Y.\phi[Z] - \phi[YZ] + \phi[Y]$  (where marginalizations are implicit) has in general a unique solution, provided that the product $YZ$ is \emph{nondegenerate}, which means that $E_{YZ}$ is ``close'' to $E_Y\times E_Z$ in a sense made precise by \cite[Def.~3.12]{Vigneaux2019-thesis} or Definition \ref{def:non-degenerate-combinatorial} in Section \ref{sec:combinatorial_cohomology}. 

\begin{prop}[\protect{\cite[Prop.~3.13]{Vigneaux2019-thesis}, see also \cite{Baudot2015}}]\label{H_for_nondegenerate_products}
Let $(\Sinf,\sheaf E)$ be a finite information structure and $X$, $Y$ two different variables in $\Ob{\Sinf}$ such that $XY\in \Ob{\Sinf}$. Let $\phi$ be a $1$-cocycle of type $\alpha$. If $XY$ is nondegenerate, there exists $\lambda\in \Rr$ such that
$$\phi[X]=\lambda S_\alpha[X], \quad \phi[Y]=\lambda S_\alpha[Y], \quad \phi[XY]=\lambda S_\alpha[XY].$$
\end{prop}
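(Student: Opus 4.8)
The plan is to turn the cocycle condition $\delta\phi=0$ into the generalized FEITH \eqref{eq:FEITH-def}, whose measurable solutions are already classified, and then to bootstrap from binary laws to the whole simplex. To begin, I would record two consequences of $\delta\phi=0$. Applying the cocycle identity $Y.\phi[Z]-\phi[YZ]+\phi[Y]=0$ to the generator $[X|X]$ and using idempotency $X\wedge X=X$ gives $X.\phi[X]=0$; evaluating this at $p\in\sheaf P_X$ and noting that conditioning $X$ on $\{X=x\}$ yields the Dirac mass $\delta_x$, one obtains
\[
0=(X.\phi[X])(p)=\sum_{x\in E_X,\,p(x)\neq 0}p(x)^\alpha\,\phi[X](\delta_x),
\]
so that $\phi[X](\delta_x)=0$ for every $x$ (take $p=\delta_x$), and likewise for $Y$. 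Next, since $\wedge$ is commutative, $XY=YX$; writing $\delta\phi=0$ for both $[X|Y]$ and $[Y|X]$ and subtracting eliminates $\phi[XY]$ and yields the symmetric equation
\[
X.\phi[Y]+\phi[X]=Y.\phi[X]+\phi[Y],
\]
an identity of functions on $\sheaf P_{XY}$ in which all marginalizations are implicit (joint locality \eqref{eq:joint_locality2}).

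Second, I would extract the FEITH. Using that $XY$ is nondegenerate (in the sense of \cite[Def.~3.12]{Vigneaux2019-thesis}), the set $E_{XY}$ realizes a $2\times2$ block over a pair of values of $X$ and a pair of values of $Y$. Restricting $p$ to such a block and sending one of the four masses to $0$ collapses the symmetric equation into \eqref{eq:FEITH-def} for the binary restriction $u$ of $\phi[X]$ (resp.\ $v$ of $\phi[Y]$); here the vanishing on Diracs supplies the boundary condition $u(0)=u(1)=0$, and $u,v$ are measurable because the coefficients $\sheaf F_\alpha$ consist of measurable functions. Daróczy's classification of the solutions of \eqref{eq:FEITH-def} (for $\alpha\neq 1$) and the Tverberg--Kannappan theorem \cite{Tverberg1958,Kannappan1973} (for $\alpha=1$, where \eqref{eq:FEITH-def} becomes \eqref{eq:FEITH}) then give $u=\lambda_X s_\alpha$ and $v=\lambda_Y s_\alpha$ for constants $\lambda_X,\lambda_Y$.

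Third, I would match the two constants and climb to the full simplex. Specializing the symmetric equation to the perfectly correlated law on $\{(x_0,y_0),(x_1,y_1)\}$ makes every conditional deterministic, so all terms but $u$ and $v$ on a common binary argument drop out, forcing $\lambda_X=\lambda_Y=:\lambda$. For general laws, taking $p$ whose $Y$-marginal sits on two values $y_0,y_1$ turns the symmetric equation into a recursivity (branching) relation that expresses $\phi[X]$ at a mixture $\beta q_0+(1-\beta)q_1$ in terms of $\phi[X](q_0)$, $\phi[X](q_1)$ and correction terms built from $v=\lambda s_\alpha$; an induction on $\absval{E_X}$, with the binary case as base, then yields $\phi[X]=\lambda S_\alpha[X]$ on all of $\sheaf P_X$ (cf.\ \eqref{eq:tsallis-entropy-def}), and symmetrically $\phi[Y]=\lambda S_\alpha[Y]$. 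Finally, the cocycle identity for $X$ and $Y$ gives $\phi[XY]=\phi[X]+X.\phi[Y]=\lambda(S_\alpha[X]+X.S_\alpha[Y])=\lambda S_\alpha[XY]$, the last equality being the $\alpha$-chain rule $\delta S_\alpha=0$ (cf.\ \eqref{eq_ex_entropic_chain_rule_deformed}).

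The main obstacle is the systematic use of nondegeneracy: the specializations above (the $2\times2$ block, the correlated law, and the mixtures realizing the branching relation) all require that the prescribed joint distributions actually live on $\sheaf P_{XY}$, whereas nondegeneracy only asserts that $E_{XY}$ is ``close'' to $E_X\times E_Y$. Converting the abstract condition \cite[Def.~3.12]{Vigneaux2019-thesis} into the concrete statement that these particular laws exist---and checking that the conditionals entering the action land where the argument needs them---is the technical heart of the proof.
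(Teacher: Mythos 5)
Your proposal is correct and follows essentially the same route as the source: the paper only cites this proposition (to the thesis and to Baudot--Bennequin), but its detailed proof of the combinatorial analogue (Proposition \ref{H_for_nondegenerate_products_combinatorial}) has exactly the architecture you describe---the symmetric equation \eqref{comb:eq_XY} obtained by subtracting the two cocycle identities for $[X|Y]$ and $[Y|X]$, reduction via nondegeneracy to a functional equation on a $2\times 2$ cell with at least three admissible points (the FEITH, cf.\ Proposition \ref{prop:func_eq}), classification of the binary solutions, and recursive formulae \eqref{comb:recurrence_X}--\eqref{comb:recurrence_Y} along a lattice path to climb to the full simplex. You have also correctly located the technical heart of the argument, namely converting the abstract nondegeneracy condition into the existence of the specific joint laws (liftings with prescribed support, plus the three-element cells) that make each specialization legitimate.
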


The following result specifies the global number of free constants. It applies to any poset  $\Sinf$ with bounded height:\footnote{The height of a poset is the length of the longest chain of morphisms $a_1\to a_2 \to ... \to a_n$, where no arrow equals an identity map.} we say in this case that $(\Sinf,\sheaf E)$ is bounded.

\begin{prop}[\protect{\cite[Thm.~3.14]{Vigneaux2019-thesis}}]\label{H1-non-degenerate}
Let $(\Sinf,\sheaf E)$ be a bounded, finite information structure. Denote by $\Sinf^\ast$ the full subcategory of $\Sinf$ generated by $\Ob\cat S \sm\{\mathbf 1\}$. Suppose that every minimal object can be factored as a nondegenerate product.  Then, 
\begin{equation}
H^1(\Sinf,\sheaf F_1) = \prod_{[\cat C] \in \pi_0(\Sinf^*) } \Rr \cdot S_1^{\cat C}
\end{equation}
and, when $\alpha\neq 1$,
\begin{equation}
H^1(\Sinf,\sheaf F_\alpha) = \left(\prod_{[\cat C]\in \pi_0(\Sinf^*) } \Rr \cdot S_\alpha^{\cat C}\right)/\Rr\cdot S_\alpha 
\end{equation}
In the  formulae above, $\cat C$ represents a connected component of $\Sinf^\ast$, and 
$$S_\alpha^{\cat C}[X] = \begin{cases}S_\alpha[X] & \text{if } X \in \Ob\cat C \\
0 & \text{if } X \notin \Ob \cat C
\end{cases}$$
\end{prop}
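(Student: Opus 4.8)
The plan is to compute the two pieces $Z^1(\sheaf F_\alpha)$ and $B^1(\sheaf F_\alpha):=\im\delta^0$ separately and then form the quotient $H^1=Z^1/B^1$. I would organize everything around the local rigidity supplied by Proposition \ref{H_for_nondegenerate_products} and the combinatorics of $\pi_0(\Sinf^*)$, and I expect the genuinely difficult part to be the global propagation of the local constants.

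First I would verify that each $S_\alpha^{\cat C}$ is a $1$-cocycle of type $\alpha$. Since $S_\alpha$ is already known to be a cocycle, the chain-rule identity $Y.S_\alpha[Z]-S_\alpha[YZ]+S_\alpha[Y]=0$ holds for every $X$ and every $Y,Z\in\Smon_X$; the only thing to check is that truncating $S_\alpha$ to the objects of a single component preserves this relation. The key observation is that when $Y$ and $Z$ are non-terminal so is $YZ$, and the projections $YZ\to Y$ and $YZ\to Z$ are arrows of $\Sinf^*$, so $Y$, $Z$ and $YZ$ all lie in the same connected component. Hence in the identity either all three objects belong to $\cat C$—and it reduces to the cocycle identity for $S_\alpha$—or none does and all three terms vanish; the degenerate cases $Y=\Us$ or $Z=\Us$ follow from $\Us.f=f$ and $S_\alpha[\Us]=0$. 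Since the $S_\alpha^{\cat C}$ have disjoint supports at the level of objects and $S_\alpha[X]$ is a nonzero function for every non-terminal $X$, they are also linearly independent.

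Next, and this is the heart of the matter, I would show that every cocycle has the form $\sum_{[\cat C]}\lambda_{\cat C}S_\alpha^{\cat C}$. Given $\phi\in Z^1(\sheaf F_\alpha)$, Proposition \ref{H_for_nondegenerate_products} attaches to each nondegenerate product $XY$ a scalar $\lambda$ with $\phi[X]=\lambda S_\alpha[X]$, $\phi[Y]=\lambda S_\alpha[Y]$, $\phi[XY]=\lambda S_\alpha[XY]$; two nondegenerate products sharing one object force the same scalar because $S_\alpha[U]\not\equiv 0$ for non-terminal $U$. Using that every minimal object is a nondegenerate product, and that in a finite bounded poset every non-terminal object is joined to a minimal one by a chain of arrows of $\Sinf^*$, I would propagate the proportionality $\phi[Z]=\lambda S_\alpha[Z]$ to all non-terminal $Z$ and show that $\lambda$ depends only on the component of $Z$, transporting the constant across each elementary arrow by combining the cocycle relation with the uniqueness in Proposition \ref{H_for_nondegenerate_products}. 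This yields $Z^1(\sheaf F_\alpha)=\prod_{[\cat C]\in\pi_0(\Sinf^*)}\Rr\cdot S_\alpha^{\cat C}$. I expect this propagation to be the main obstacle: Proposition \ref{H_for_nondegenerate_products} only controls $\phi$ on objects that actually occur in a nondegenerate product, whereas a general non-terminal object need not, so the delicate point is to carry both the proportionality and the value of the constant across arrows that do not themselves arise from a nondegenerate factorization, and to check that the resulting equivalence classes coincide exactly with $\pi_0(\Sinf^*)$—this is precisely where finiteness, bounded height and factorizability of minimal objects are all needed.

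Finally I would compute the coboundaries. A $0$-cochain is a single constant $c=\phi_\Us[\:]\in\Rr$, and a direct computation gives, for every non-terminal $X$ and $p\in\sheaf P_X$, $(\delta^0 c)[X](p)=(X.c)(p)-c=c\bigl(\sum_{x\in E_X}p(x)^\alpha-1\bigr)=c(1-\alpha)S_\alpha[X](p)$, where $S_\alpha=\sum_{[\cat C]}S_\alpha^{\cat C}$ is the total entropy cochain. Thus $B^1(\sheaf F_1)=0$, giving $H^1(\Sinf,\sheaf F_1)=\prod_{[\cat C]}\Rr\cdot S_1^{\cat C}$, while for $\alpha\neq1$ one has $B^1(\sheaf F_\alpha)=\Rr\cdot S_\alpha$, the diagonal line inside $\prod_{[\cat C]}\Rr\cdot S_\alpha^{\cat C}$, whence $H^1(\Sinf,\sheaf F_\alpha)=\bigl(\prod_{[\cat C]}\Rr\cdot S_\alpha^{\cat C}\bigr)/\Rr\cdot S_\alpha$, as claimed.
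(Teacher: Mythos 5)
First, a caveat: the paper does not actually prove this proposition --- it is imported verbatim from \cite[Thm.~3.14]{Vigneaux2019-thesis} --- so I am judging your argument on its own terms rather than against a proof printed here. Your architecture (compute $Z^1(\sheaf F_\alpha)$ via the local rigidity of Proposition \ref{H_for_nondegenerate_products}, compute the coboundaries by hand, then quotient) is the right one and is essentially that of the cited source. Two of your three steps are complete and correct: the verification that each $S_\alpha^{\cat C}$ is a cocycle (the observation that $Y,Z\in\Ob\Sinf^*$ forces $YZ\in\Ob\Sinf^*$ and puts all three in one component is exactly the point), and the computation $(\delta^0 c)[X](p)=c\bigl(\sum_x p(x)^\alpha-1\bigr)=c(1-\alpha)S_\alpha[X](p)$, which correctly yields $B^1(\sheaf F_1)=0$ and $B^1(\sheaf F_\alpha)=\Rr\cdot S_\alpha$ for $\alpha\neq 1$.

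The genuine gap is the step you yourself flag as the main obstacle: extending $\phi[Z]=\lambda S_\alpha[Z]$ to non-terminal objects $Z$ that do not occur in any nondegenerate product, and showing $\lambda$ is constant on components. ``Combining the cocycle relation with the uniqueness in Proposition \ref{H_for_nondegenerate_products}'' is a pointer, not an argument, and the uniqueness in that proposition is not the mechanism that does the work. Here is the missing mechanism. By boundedness, every non-terminal $Z$ admits a minimal object $M$ with $M\to Z$, and by hypothesis $\phi[M]=\lambda_M S_\alpha[M]$. Since $M\to Z$, both $Z$ and $M$ lie in $\Smon_M$ and $ZM=M$ (poset axiom), so the cocycle condition applied to the generator $[Z|M]$ reads $\phi[M]=Z.\phi[M]+\phi[Z]$, whence $\phi[Z]=\phi[M]-Z.\phi[M]=\lambda_M\bigl(S_\alpha[M]-Z.S_\alpha[M]\bigr)=\lambda_M S_\alpha[Z]$, the last equality because $S_\alpha$ is itself a cocycle. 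Since $|E_U|\geq 2$ for every non-terminal $U$ (conservativity of $\sheaf E$), $S_\alpha[U]\not\equiv 0$, so each non-terminal $Z$ carries a well-defined scalar $\lambda(Z)$ with $\phi[Z]=\lambda(Z)S_\alpha[Z]$. Finally, the same computation applied to an arbitrary arrow $W\to Z$ of $\Sinf^*$ (using $ZW=W$) gives $\phi[Z]=\phi[W]-Z.\phi[W]=\lambda(W)S_\alpha[Z]$, so $\lambda(Z)=\lambda(W)$; hence $\lambda$ is constant along every zigzag of arrows in $\Sinf^*$, i.e.\ on each connected component. With this paragraph inserted, your proof closes.
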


\section{Counting functions}\label{sec:counting_functions}
 
Let $(\Sinf,\sheaf E)$ be a finite  information structure, and $\sheaf C:\Sinf \to \Sets$ a functor that associates to each object $X$ the set 
\begin{equation}
\sheaf C_X=\bigset{\nu:E_X\to \Nn}{\sum_{x\in E_X} \nu(x) > 0},
\end{equation}
and to each arrow $\pi:X\to Y$, associated to a surjection $\sheaf E\pi:E_X\to E_Y$, the map $\sheaf C\pi:\sheaf C_X \to \sheaf C_Y$ that verifies $\sheaf C\pi(\nu)(y)=\sum_{x\in\sheaf E\pi^{-1}(y)}\nu(x)$. To simplify notation, we  write $Y_* \nu$ instead of $\sheaf C\pi(\nu)$, whenever $\pi$ is clear from context. The elements of $\sheaf C_X$ are called \keyt{counting functions}.\index{counting function} For $\nu_X\in \sheaf C_X$, we define its \keyt{support}  as $\set{x\in E_X}{\nu_X(x)\neq 0}$, and its \keyt{magnitude} as the quantity $\norm{v}:= \sum_{x\in X} \nu(x)$.  

For any subset $A$ of $X$, there is a restriction  
\begin{equation}
\nu|_{A} (x) := \begin{cases} \nu(x) & \text{if } x\in A \\ 
0 & \text{otherwise}\end{cases}.
\end{equation}
When $\norm{\nu|_{A}}>0$, we call $\nu|_{A}$ the \keyt{restricted counting given $A\subset X$}.\index{counting function!restricted}
Given an arrow $\pi:X\to Y$, the notation  $\nu|_{Y=y}$ stands for $\nu|_{\sheaf \pi^{-1}(y)}$. Remark that $\nu_\emptyset= 0$ and $\norm{\nu|_{Y=y}}= Y_*\nu(y)$.

Consider now the multiplicative abelian group $\sheaf G_X$, whose elements are $\Rr_+^*$-valued measurable functions defined on $\sheaf C_X$. By $\Rr_+^*$ we mean $\set{x\in \Rr}{x> 0}$. (The multiplicative notation is convenient, because multinomial coefficients appear directly as cocycles.) The group $\sheaf G_X$ becomes a real vector space if we define $(r.g)(\nu):= (g(\nu))^r$, for each $g\in \sheaf G_X$ and each $r\in \Rr$. \footnote{In principle this is a right action, but this is immaterial because $\Rr$ is commutative.} For each $Y\in \Smon_X$ and each $g\in \sheaf G_X$, set
\begin{equation}
(Y.g)(\nu) := \prod_{\substack{y\in E_Y \\ Y_*\nu(y)\neq 0}} g(\nu|_{Y=y}).
\end{equation}
Finally, define $(aY).g := a.(Y.g) = Y.(a.g)$. As a consequence of the following proposition, these formulae give an homomorphism $\rho_X:\Sring_X \to \End(\sheaf G_X)$, that turns $\sheaf G_X$ into an $\Sring_X$-module.

\begin{prop}
Given variables $Y,Z\in \Smon_ X$ and $g\in \sheaf G_X$,
\begin{equation}
ZY.g = Z.(Y.g).
\end{equation}
\end{prop}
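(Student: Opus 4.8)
The plan is to unfold both sides of $ZY.g = Z.(Y.g)$ as products indexed over fibers of the structural maps, and then to match these products term by term using condition (3) on the functor $\sheaf E$, namely the injectivity of the comparison map $E_{ZY}\to E_Z\times E_Y$. First I would fix notation for the arrows involved. Since $Y,Z\in\Smon_X$, the span $Y\leftarrow X\rightarrow Z$ guarantees that $ZY:=Z\wedge Y$ exists and lies in $\Smon_X$; write $\pi\colon X\to Y$, $\sigma\colon X\to Z$, $\rho\colon X\to ZY$ for the structural maps in the poset, and $p\colon ZY\to Z$, $q\colon ZY\to Y$ for the projections of the product, so that $\sigma=p\circ\rho$ and $\pi=q\circ\rho$. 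For $w\in E_{ZY}$ set $z(w):=\sheaf E p(w)$ and $y(w):=\sheaf E q(w)$; condition (3) asserts precisely that $w\mapsto(z(w),y(w))$ is injective.

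The crux of the argument is a fiber identity, which I would establish first:
\begin{equation*}
\sheaf E\rho^{-1}(w) = \sheaf E\sigma^{-1}(z(w))\cap\sheaf E\pi^{-1}(y(w))\quad\text{for every }w\in E_{ZY},
\end{equation*}
together with the complementary fact that $\sheaf E\sigma^{-1}(z)\cap\sheaf E\pi^{-1}(y)=\emptyset$ whenever $(z,y)$ lies outside the image of $E_{ZY}$. The inclusion $\subseteq$ is immediate from $\sigma=p\circ\rho$ and $\pi=q\circ\rho$ and does not use injectivity. The reverse inclusion and the emptiness claim are exactly where injectivity enters: if $x$ lies in the intersection, then $\sheaf E\rho(x)$ has image $(z,y)$ in $E_Z\times E_Y$, and injectivity forces $\sheaf E\rho(x)=w$ (respectively shows no such $x$ can exist when $(z,y)$ is not attained).

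With this in hand I would expand the two sides. On the left, $(ZY.g)(\nu)=\prod_{w}g(\nu|_{ZY=w})$ over $w$ with $(ZY)_*\nu(w)\neq 0$. On the right, substituting the definition of $Y.g$ evaluated at $\nu|_{Z=z}$,
\begin{equation*}
(Z.(Y.g))(\nu)=\prod_{\substack{z\in E_Z\\ Z_*\nu(z)\neq 0}}\ \prod_{\substack{y\in E_Y\\ Y_*(\nu|_{Z=z})(y)\neq 0}} g\big((\nu|_{Z=z})|_{Y=y}\big).
\end{equation*}
The fiber identity shows that $(\nu|_{Z=z})|_{Y=y}$ is $\nu$ restricted to $\sheaf E\sigma^{-1}(z)\cap\sheaf E\pi^{-1}(y)$, that $\nu|_{ZY=w}$ is $\nu$ restricted to $\sheaf E\sigma^{-1}(z(w))\cap\sheaf E\pi^{-1}(y(w))$, and that the weight $Y_*(\nu|_{Z=z})(y)$ equals the magnitude of that restricted counting, which is $(ZY)_*\nu(w)$ when $(z,y)=(z(w),y(w))$. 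So both sides are products of the same factors $g$ of the same restricted countings.

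The step I expect to be the main obstacle is the careful bookkeeping of the support conditions, needed to see that the two index sets agree exactly. I would observe that the inner condition $Y_*(\nu|_{Z=z})(y)\neq 0$ already forces $Z_*\nu(z)\neq 0$, since the intersection fiber is contained in the $Z$-fiber; hence the outer condition on the right is redundant, and any $z$ failing it contributes only an empty product equal to $1$. Conversely, a pair $(z,y)$ with intersection of positive magnitude is nonempty, hence by injectivity of the form $(z(w),y(w))$ for a unique $w\in E_{ZY}$, with that magnitude equal to $(ZY)_*\nu(w)$. This yields a bijection between the nontrivial factors on the two sides that matches the arguments of $g$, and the identity follows; associativity of $\Smon_X$ and commutativity of multiplication in $\Rr_+^*$ are used implicitly.
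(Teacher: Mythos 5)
Your proof is correct and follows essentially the same route as the paper's: both unfold $Z.(Y.g)$ as a double product over pairs $(z,y)$, use the injectivity of $E_{ZY}\hookrightarrow E_Z\times E_Y$ to identify the fiber of $E_X\to E_{ZY}$ over $w$ with the intersection of the corresponding $Z$- and $Y$-fibers (pairs outside the image giving empty restrictions that contribute trivially), and then relabel the surviving factors via the resulting bijection with $E_{ZY}$. Your explicit bookkeeping of the support conditions only makes precise what the paper's relabeling step leaves implicit.
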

\begin{proof}
Set $W$ equal to $ZY:= Z\wedge Y$. Since in $\Sinf$ we have the commutative diagram
$$
\begin{tikzcd}[column sep=huge, row sep=large]
	& X \ar[ld, swap, "\pi_{YX}"] \ar[rd, "\pi_{ZX}"] \ar[d, "\pi_{WX}"] 
 		&  \\ 
 Y 
 	&  W \ar[l, "\pi_{YW}"] \ar[r, swap, "\pi_{ZW}" ]
 		& Z
 	\end{tikzcd}
 	$$
 	we obtain the following commutative diagram of sets
 	$$
\begin{tikzcd}[column sep=huge, row sep=large]
	& E(X) \ar[ld, swap, "\pi_{YX}"] \ar[rd, "\pi_{ZX}"] \ar[d, "\pi_{WX}"] 
 		&  \\ 
 E(Y) &  E(W) \ar[l, "\pi_{YW}"] \ar[r, swap, "\pi_{ZW}"] \ar[d, hookrightarrow, "\iota"]
 		& E(Z) \\
 		& E(X)\times E(Y) \ar[ul, "\pi_1"] \ar[ur, swap,"\pi_2"] &
 	\end{tikzcd}
 	$$
 	where the upper triangle is explained by the functoriality of $\sheaf E$ (to simplify notation, we write $\pi$ instead of $\sheaf E\pi$) and the lower one by the universal property of products in $\Sets$. The mapping $\iota$ is an injection by  definition of an information structure. 
 	
 	Note that
 	\begin{align}
 	Z.(Y.g)(\nu) & = \prod_{\substack{z\in E_Z \\ Z_*\nu(z)\neq 0}} (Y.g)(\nu|_{Z=z}) \label{product1}\\
&= \prod_{\substack{z\in E_Z \\ Z_*\nu(z)\neq 0}} \prod_{\substack{y\in E_Y \\ Y_*\nu|_{Z=z}(y)\neq 0}} g((\nu|_{Z=z})|_{Y=y}).\label{product2}
\end{align}
From the definition of conditioning, we deduce that $(\nu|_{Z=z})|_{Y=y} = \nu|_{\{Z=z\}\cap\{Y=y\}}= \nu|_{A(y,z)}$, where we have set $$A(y,z):=\pi_{YX}^{-1}(y)\cap\pi_{ZX}^{-1}(z) = \pi_{WX}^{-1}\iota^{-1}(\pi_1^{-1}(y) \cap \pi_2^{-1}(x)).$$ If $(y,z) \not\in \im \iota$, $A(y,z)$ is empty, so $\nu|_{A(y,z)}=0$, as well as $\norm{\nu|_{A(y,z)}} = Y_*\nu|_{Z=z}(y)=0$. Therefore, the product in \eqref{product2} can be restricted to pairs $(y,z) \in \im \iota$, and the condition $Y_*\nu|_{Z=z}(y)=\norm{\nu|_{A(y,z)}}\neq 0$ translates into $W_* \nu(\iota^{-1}(y,z))=\norm{\nu|_{A(y,z)}}\neq 0$. Since there is a bijection $ E_W \cong \im \iota$, upon relabeling we obtain the desired equality.   
\end{proof}

To any arrow $\pi:X\to Y$, we associate the map $\sheaf G\pi:\sheaf G_Y\to \sheaf G_X$ such that $\sheaf G\pi(g) = g \circ \sheaf C(\pi)$. Then $\sheaf G:\Sinf \to \Sets$ is a contravariant functor. In fact, it is a presheaf of $\Sring$-modules:  it is not difficult to prove that the commmutivity of \eqref{eq:naturality_modules} holds, cf. \cite[Prop.~3.2]{Vigneaux2019-thesis}.

\section{Combinatorial information cohomology}\label{sec:combinatorial_cohomology}
 In this section, we compute the information cohomology of $\Sinf$ with coefficients in $\sheaf G$, which we call \keyt{combinatorial information cohomology}. See Section \ref{sec:info_structures}.
 
 The elements of $C^n(\sheaf G) :=\Hom_\Sring (\sheaf B_n, \sheaf G)$ are called \emph{combinatorial $n$-cochains}. The coboundary of $\psi\in C^n(\sheaf G)$ is the $(n+1)$-cochain $\delta \psi: \sheaf B_{n+1} \to \sheaf G$ defined on the generators of $\sheaf B_{n+1}$ by
\begin{multline}\label{eq:general_coboundary}
\delta \psi[X_1|...|X_{n+1}] =\\ (X_1.\psi[X_2|...|X_{n+1}]) \left(\prod_{k=1}^{n}  (\psi[X_1|...|X_kX_{k+1}|...|X_n])^{(-1)^k}\right) {\psi[X_1|...|X_{n}]}^{(-1)^{n+1}},
\end{multline}
because we are using multiplicative notation for $\sheaf G$. 
A combinatorial  $n$-cocycle is an element $\psi$ in $C^n(\Sinf, \sheaf G)$ that verifies $\delta \psi=1$; the submodule of all $n$-cocycles is denoted by $Z^n(\sheaf G)$. The image under $\delta$ of $C^{n-1}$ forms another submodule of $C^n(\sheaf G)$, denoted $\delta C^{n-1}(\sheaf G)$; its elements are called combinatorial $n$-coboundaries.

\subsection{Computation of $H^0$}

The $0$-cochains are given by a collection of functions $\{\psi_X\}_{X\in \Ob \Sinf}$ (the image of the generator $[\:]$ under $\psi$ over each $X$). Joint locality implies that, for every $X\in \Ob  \Sinf$, $\psi_X(\nu) = \psi_{\mathbf 1}(\mathbf 1_*\nu_X) = \psi_{\mathbf 1}(\norm{\nu_X})$. Hence, $0$-cochains are in one-to-one correspondence with measurable functions of the magnitude, $\Psi:=\psi_{\mathbf 1} : \Nn^* \to \Rr_+$.

A $0$-cocycle $\psi$ must verify, for each $Y$ coarser than $X$, the equation $(\delta \psi)_X[Y]= (Y.\psi_X)(\psi_X)^{-1}=1$, which is equivalent to
\begin{equation}\label{general_cocycle_0}
 \Psi (\norm{\nu_X})= \prod_{\substack{y\in Y \\ Y_*\nu(y)\neq 0}}  \Psi (\norm{\nu|_{Y=y}}).
\end{equation}
Whenever $|Y|\geq 2$, this means in particular that
\begin{equation}
\Psi(x+y) = \Psi(x)\Psi(y)
\end{equation}
for every $x,y \in \Nn^*$. Setting $a:=\Psi(1)>0$, one easily concludes by recurrence that $\Psi(n) = a^n = \exp(n\ln(a))$.   The function $\Psi(x) = \exp(kx)$, for arbitrary $k\in \Rr$, is a general solution of \eqref{general_cocycle_0}, because $\norm{\nu_X}= \sum_{\substack{y\in Y \\ Y_*\nu(y)\neq 0}} \norm{\nu|_{Y=y}}$. We have proved the following proposition.

\begin{prop}
Let $\Exp \in \Hom_{\Sring}(\ast,\sheaf G)$ be the section defined by 
\begin{equation*}
\Exp_X:\sheaf C_X \to \Rr^*_+, \quad \nu\mapsto \exp(\norm\nu).
\end{equation*}
 Then $H^0(\Sinf, \sheaf G) = \langle \Exp \rangle_\Rr$.
\end{prop}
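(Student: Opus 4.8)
The plan is to identify $H^0(\Sinf,\sheaf G)$ with the module $Z^0(\sheaf G)$ of combinatorial $0$-cocycles—there being no incoming differential in degree $0$—and then to solve the cocycle equation explicitly. The reduction already carried out before the statement does the first bookkeeping: by joint locality a $0$-cochain $\psi$ is completely determined by the single measurable function $\Psi:=\psi_{\mathbf 1}:\Nn^*\to \Rr_+^*$ of the magnitude, through $\psi_X(\nu)=\Psi(\norm{\nu})$. So the entire content lies in turning $\delta\psi=1$ into a functional equation for $\Psi$ and solving it.

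First I would spell out the cocycle condition. For every $X\in\Ob\Sinf$ and every $Y\in\Smon_X$, the degree-$0$ case of the coboundary formula \eqref{eq:general_coboundary} reads $(\delta\psi)_X[Y]=(Y.\psi_X)\,\psi_X^{-1}$; inserting the definition of the $\Smon_X$-action together with $\psi_X(\nu)=\Psi(\norm{\nu})$ and the identity $\norm{\nu|_{Y=y}}=Y_*\nu(y)$ reproduces exactly equation \eqref{general_cocycle_0}. The next and decisive step is to extract multiplicativity from it: choosing a coarse-graining $Y\in\Smon_X$ whose image $E_Y$ separates a chosen counting function $\nu$ into two nonempty fibers of prescribed magnitudes $x,y\in\Nn^*$, equation \eqref{general_cocycle_0} collapses to $\Psi(x+y)=\Psi(x)\Psi(y)$. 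Since $\Rr_+^*$-valued counting functions can place arbitrarily large values on the outcomes, this holds for all $x,y\in\Nn^*$, and a one-line induction with $a:=\Psi(1)>0$ gives $\Psi(n)=a^n=\exp(n\ln a)$; writing $k:=\ln a\in\Rr$ yields $\Psi=\exp(k\,\cdot)$.

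Finally I would check the converse inclusion and assemble the identification. Because magnitude is additive over the fibers of any coarse-graining, $\norm{\nu}=\sum_{y:\,Y_*\nu(y)\neq 0}\norm{\nu|_{Y=y}}$, every function $\Psi(x)=\exp(kx)$ satisfies \eqref{general_cocycle_0} for all $X$ and all $Y\in\Smon_X$; hence these are precisely the $0$-cocycles. Translating back through the $\Rr$-action $(r.g)(\nu)=g(\nu)^r$, the cocycle attached to $\exp(k\,\cdot)$ is $\psi_X(\nu)=\exp(k\norm{\nu})=(\Exp_X(\nu))^{k}=(k.\Exp)_X(\nu)$, so $\psi=k.\Exp$. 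This shows $Z^0(\sheaf G)=\langle\Exp\rangle_\Rr$ and therefore $H^0(\Sinf,\sheaf G)=\langle\Exp\rangle_\Rr$.

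The only genuine subtlety—and hence what I regard as the main obstacle—is the availability of a coarse-graining realizing an arbitrary two-block split of the magnitude, which is exactly what turns \eqref{general_cocycle_0} into a Cauchy equation on $\Nn^*$. This requires $\Sinf$ to contain an object with at least two outcomes (note that $Y=\mathbf 1$ gives only the trivial relation, since $E_{\mathbf 1}$ is a singleton); in the degenerate case where every $E_X$ is a single point, each $0$-cochain is vacuously a cocycle and $H^0$ is strictly larger. Once this structural input is granted, the functional-equation solution and the verification of the converse are routine.
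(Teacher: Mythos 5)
Your proof is correct and follows essentially the same route as the paper's: reduce a $0$-cochain via joint locality to a function $\Psi$ of the magnitude, turn the cocycle equation \eqref{general_cocycle_0} into the Cauchy equation $\Psi(x+y)=\Psi(x)\Psi(y)$ using a coarse-graining with two nonempty fibers, solve by recurrence, and verify the converse from the additivity of the magnitude over fibers. Your closing caveat about needing some object with $|E_Y|\geq 2$ is a legitimate observation that the paper leaves implicit (its argument begins ``whenever $|Y|\geq 2$'' without remarking on the degenerate case), but it does not change the substance of the argument.
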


\subsection{Computation of $H^1$}

 For any $1$-cochain $\psi$, we set $\psi[Z]:= \psi_Z[Z] = \psi_X[Z]$, the last equality being valid for any $X$ such that $X\to Z$ by joint locality.

In order to compute the $1$-cocycle, we prove first an auxiliary result.

\begin{lem}\label{prop:1_cocycles_G}
Let $\psi\in Z^1(\sheaf G)$. For every $X\in \Ob \Sinf$,
if $\nu\in \sheaf C_X$ verifies $\nu = \nu|_{X=x_0}$ for some $x_0\in E_X$, then $\psi[X](\nu)= 1$. 
\end{lem}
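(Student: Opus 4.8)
The plan is to show that a $1$-cocycle $\psi$ takes the value $1$ on any counting function supported on a single point $x_0 \in E_X$. The natural idea is to exploit the cocycle condition, which for a $1$-cochain reads $\delta\psi[Y|Z] = 1$, i.e. in multiplicative notation
\begin{equation*}
(Y.\psi[Z])\,\psi[YZ]^{-1}\,\psi[Y] = 1,
\end{equation*}
for all $Y,Z \in \Smon_X$. First I would look for a clever choice of $Y$ and $Z$ that isolates the desired value. The most economical choice is to take $Y = Z = X$ (assuming $XX = X$ since $X\wedge X = X$ in a poset), which should force an idempotency-type relation on $\psi[X]$ evaluated at $\nu$. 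More precisely, the cocycle condition applied to $[X|X]$ gives a relation between $X.\psi[X]$, $\psi[XX] = \psi[X]$, and $\psi[X]$, and evaluating at the single-point counting $\nu = \nu|_{X=x_0}$ should collapse the action $X.\psi[X]$ into something tractable.

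The key observation I would exploit is how the monoid action behaves on a counting function concentrated at one point. By the definition of the action,
\begin{equation*}
(X.\psi[X])(\nu) = \prod_{\substack{x\in E_X \\ X_*\nu(x)\neq 0}} \psi[X](\nu|_{X=x}),
\end{equation*}
but if $\nu = \nu|_{X=x_0}$ is supported only at $x_0$, then the product has a single factor, namely $\psi[X](\nu|_{X=x_0}) = \psi[X](\nu)$. So the action of $X$ on $\psi[X]$, evaluated at such a concentrated $\nu$, reproduces $\psi[X](\nu)$ itself. Substituting this into the cocycle relation for $[X|X]$ — where $\psi[XX] = \psi[X]$ because $XX = X\wedge X = X$ — yields an equation of the form $\psi[X](\nu)\cdot\psi[X](\nu)^{-1}\cdot\psi[X](\nu) = 1$, hence $\psi[X](\nu) = 1$.

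Concretely, the cocycle condition $\delta\psi[X|X](\nu) = 1$ expands (using $X_*\nu = \nu$ under the identification $E_{XX}\cong E_X$) to
\begin{equation*}
(X.\psi[X])(\nu)\cdot\psi[X](\nu)^{-1}\cdot\psi[X](\nu) = 1,
\end{equation*}
and after cancelling the last two factors and replacing $(X.\psi[X])(\nu)$ by $\psi[X](\nu)$ via the single-point computation above, one obtains $\psi[X](\nu) = 1$ directly. The main subtlety — and the point I would check carefully — is the behaviour of the product $XX$ and the corresponding identification $E_{XX} \cong E_X$: I must verify that evaluating $\psi[XX]$ at $\nu$ genuinely reduces to $\psi[X](\nu)$ and that the restriction $\nu|_{X=x_0}$ interacts correctly with the diagonal structure $E_{XX}\hookrightarrow E_X\times E_X$. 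This is precisely the kind of bookkeeping handled by the injectivity of $\iota$ in the preceding proposition, so I expect the argument to go through once the identifications are made explicit; the rest is the elementary cancellation outlined above.
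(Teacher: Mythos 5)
Your argument is correct and is essentially the paper's own proof: both apply the cocycle condition to $[X|X]$, use $XX=X$ to reduce it to $(X.\psi[X])(\nu)=1$, and observe that for $\nu$ supported at the single point $x_0$ this product collapses to the single factor $\psi[X](\nu)$. The identification $E_{XX}=E_X$ you flag as a subtlety is immediate here because $\Sinf$ is a poset, so $X\wedge X=X$ strictly.
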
 
In particular, $\psi[\mathbf 1] \equiv 1$. 
\begin{proof}
The cocycle condition implies in particular that $\psi[XX]=(X.\psi[X])\psi[X]$, this is 
\begin{equation}\label{product_for_1_cocycle}
1=\prod_{\substack{x\in E_X \\ \nu(x)\neq 0}} \psi[X](\nu|_{X=x}) = \psi[X](\nu|_{X=x_0}).
\end{equation} 
\end{proof}

The following result will be essential for the characterization of all the $1$-cocycles. It is the combinatorial analogue of \cite[Prop.~3.10]{Vigneaux2019-thesis}, where a variant of  the fundamental equation of information theory \eqref{eq:FEITH-def} appears. Consequently, \eqref{fundamental_equation_discrete} and \eqref{eq:func_eq_1} can be seen as combinatorial generalizations of this functional equation.  

\begin{prop}[Combinatorial FEITH]\label{prop:func_eq}
Let $f_1,f_2 :\Nn\sm \{(0,0)\}\to \Rr_+$ be two unknown functions. The functions $f_1,f_2$ satisfy the conditions
\begin{enumerate}
\item\label{prop:func_eq_cond1} for $i\in \{1,2\}$, for every $n \in \Nn^*$, $f(n,0)=f(0,n)=1$.
\item\label{prop:func_eq_cond2}  for every $\nu_0, \nu_1, \nu_2\in \Nn$ such that $\nu_0+\nu_1+\nu_2\neq 0$,
\begin{equation}\label{fundamental_equation_discrete}
f_1(\nu_0+\nu_2,\nu_1)f_2(\nu_0,\nu_2)=f_2(\nu_0+\nu_1,\nu_2)f_1(\nu_0,\nu_1).
\end{equation}
\end{enumerate} 
if, and only if, there exists a sequence of numbers $D= \{D_i\}_{i\geq 1} \subset \Rr_+$,  such that $D_1=1$, and
\begin{equation}
f(\nu_1,\nu_2) = \frac{[\nu_1+\nu_2]_D!}{[\nu_1]_D![\nu_2]_D!},
\end{equation}
 where $[n]_D! = D_n D_{n-1}\cdots D_1$ whenever $n>0$, and $[0]_D!=1$.
\end{prop}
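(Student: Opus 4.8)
The plan is to prove the two implications separately, the direction from the explicit formula to the functional equation being a routine verification, and the converse requiring the extraction of the sequence $D$ directly from $f_1$.

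For the (easy) ``if'' direction, I would substitute $f(\nu_1,\nu_2)=[\nu_1+\nu_2]_D!/([\nu_1]_D![\nu_2]_D!)$ into both sides of \eqref{fundamental_equation_discrete}: the left member is $\frac{[\nu_0+\nu_1+\nu_2]_D!}{[\nu_0+\nu_2]_D![\nu_1]_D!}\cdot\frac{[\nu_0+\nu_2]_D!}{[\nu_0]_D![\nu_2]_D!}$ and the right member is $\frac{[\nu_0+\nu_1+\nu_2]_D!}{[\nu_0+\nu_1]_D![\nu_2]_D!}\cdot\frac{[\nu_0+\nu_1]_D!}{[\nu_0]_D![\nu_1]_D!}$, and after cancelling $[\nu_0+\nu_2]_D!$ (resp. $[\nu_0+\nu_1]_D!$) both collapse to $[\nu_0+\nu_1+\nu_2]_D!/([\nu_0]_D![\nu_1]_D![\nu_2]_D!)$; thus \eqref{fundamental_equation_discrete} holds with $f_1=f_2=f$, while hypothesis \eqref{prop:func_eq_cond1} is immediate from $[0]_D!=1$.

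For the ``only if'' direction, I would first specialise \eqref{fundamental_equation_discrete} at $\nu_0=0$. Using $f_i(0,n)=1$ from hypothesis \eqref{prop:func_eq_cond1}, this yields the symmetry relation $f_2(\nu_1,\nu_2)=f_1(\nu_2,\nu_1)$ for all $\nu_1,\nu_2\geq 1$, which ties the two unknowns together. I then define the candidate sequence by $D_n:=f_1(1,n-1)$ for $n\geq 1$; hypothesis \eqref{prop:func_eq_cond1} gives $D_1=f_1(1,0)=1$, and the positivity of $f_1$ forces $D_n>0$, so indeed $D\subset\Rr_+$. The key step is to specialise \eqref{fundamental_equation_discrete} at $\nu_2=1$, which, after dividing by $f_2(\nu_0,1)>0$, produces the one-step recursion $f_1(\nu_0+1,\nu_1)=\frac{f_2(\nu_0+\nu_1,1)}{f_2(\nu_0,1)}\,f_1(\nu_0,\nu_1)$ in the first argument. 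The symmetry relation together with \eqref{prop:func_eq_cond1} (for the stratum $\nu_0=0$) identifies $f_2(m,1)=D_{m+1}$ for every $m\geq 0$, turning the recursion into $f_1(\nu_0+1,\nu_1)=\frac{D_{\nu_0+\nu_1+1}}{D_{\nu_0+1}}f_1(\nu_0,\nu_1)$.

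Finally, I would fix $\nu_1\geq 1$, start from the boundary value $f_1(0,\nu_1)=1$, and telescope: the resulting product $\prod_{j=0}^{m-1} D_{j+\nu_1+1}/D_{j+1}$ rearranges into $[m+\nu_1]_D!/([m]_D![\nu_1]_D!)$, so $f_1$ equals the announced generalized binomial coefficient on $\{\nu_1\geq 1\}$; the remaining stratum $\nu_1=0$ is pinned by \eqref{prop:func_eq_cond1}. Since this expression is manifestly symmetric in its two arguments, the symmetry relation above upgrades to $f_1=f_2=f_D$ on all of $\Nn^2\setminus\{(0,0)\}$. I expect the only delicate points to be the bookkeeping of domain constraints (ensuring every evaluation stays inside $\Nn^2\setminus\{(0,0)\}$ and that one never divides by a value forced to vanish) and the careful treatment of the boundary strata $\{\nu_i=0\}$, where the values come from \eqref{prop:func_eq_cond1} rather than from the recursion; the algebraic heart of the argument—the telescoping of the first-argument recursion—is otherwise straightforward.
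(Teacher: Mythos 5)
Your proof is correct and follows essentially the same route as the paper's: specialize the functional equation at $\nu_0=0$ to tie $f_1$ and $f_2$ together, set $D_n=f_1(1,n-1)$, derive the one-step recursion $f_1(\nu_0+1,\nu_1)=\frac{D_{\nu_0+\nu_1+1}}{D_{\nu_0+1}}f_1(\nu_0,\nu_1)$, and telescope from the boundary values given by condition (i). The only (harmless) difference is that the paper first collapses everything to a single function $f=f_1$ and separately verifies $f(n,1)=f(1,n)$ before defining $D$, whereas you bypass that auxiliary step by using $f_2(m,1)=f_1(1,m)$ directly.
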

 \begin{proof}
 Setting $\nu_0=0$, we conclude first that $f_1(\nu_2,\nu_1)=f_2(\nu_1,\nu_2)$. Define $f(x,y):= f_1(x,y)$; it satisfies the equation 
 \begin{equation}\label{eq:func_eq_1}
 \frac{f(\nu_0+\nu_1, \nu_2)}{f(\nu_0,\nu_2)}= \frac{f(\nu_1, \nu_0+\nu_2)}{f(\nu_1,\nu_0)}.
 \end{equation}
 for any $\nu_0, \nu_1, \nu_2\in \Nn$ such that $\nu_0+\nu_1+\nu_2\neq 0$.  In particular, if $\nu_0=t>0$, and $\nu_1=\nu_2=s>0$, 
 \begin{equation}
 \frac{f(t+s,s)}{f(s,t+s)}=\frac{f(t,s)}{f(s,t)}.
 \end{equation}
 Thus,
 for any $n>1$,
 \begin{equation}
 \frac{f(n,1)}{f(1,n)}=  \frac{f(n-1,1)}{f(1,n-1)}= \cdots =  \frac{f(1,1)}{f(1,1)}=1.
 \end{equation}
 Let $D_{n+1}$ be the common value of $f(n,1)$ and $f(1,n)$. 
 From Equation \eqref{eq:func_eq_1}, setting $\nu_0=n$, $\nu_1=1$, and $\nu_2=k$, we can obtain a recurrence formula for $f(n+1,k)$:
 \begin{equation}
 f(n+1, k)= \frac{D_{n+k+1}}{D_{n+1}} f(n,k).
 \end{equation}
By repeated application of this recurrence, we conclude that
 \begin{equation}
 f(n,k)= \frac{D_{n+k}}{D_n}\cdot\frac{D_{n+k-1}}{D_{n-1}}\cdots \frac{D_{k+1}}{D_1} f(0,k).
 \end{equation}
 Remark that $D_1=f(0,1)=1$, and $f(0,k)=1$ (Lemma \ref{prop:1_cocycles_G}). Therefore, $f$ can be rewritten as
 \begin{equation}\label{solution_f}
 f(\nu_1,\nu_2) = \frac{[\nu_1+\nu_2]_D!}{[\nu_1]_D![\nu_2]_D!}.
\end{equation}  
This formula still make sense when $\nu_1=0$ or $\nu_2=0$. Conversely, for any sequence $D= \{D_i\}_{i\geq 1}$, with $D_1=1$, the assignment $f_1=f_2 =f$ satisfies \eqref{eq:func_eq_1}, and thus represents the most general solution.
\end{proof}

\begin{ex}
Let $(\Sinf, \sheaf E)$ be an information structure defined as follows: $\Sinf$ is the poset  represented by the graph
$$
\begin{tikzcd}
& \Us & \\
X_1 \ar[ur] & & X_2 \ar[ul] \\
& X_1X_2 \ar[ul] \ar[ur]& 
\end{tikzcd}
$$ 
and $E$ is the functor defined at the level of objects by $E(X_1) =\{x_{\{1\}}, x_{\{0,2\}}\}$, $E(X_2)=\{x_{\{2\}}, x_{\{0,1\}}\}$, and $E(X_1X_2) =\{x_{\{1\}},x_{\{2\}},x_{\{3\}}\}$; for each arrow $\pi:X\to Y$, the map $\pi_*:E(X)\to E(Y)$ sends $x_I\to x_J$ iff $I\subset J$. 

For this structure, the cocycle condition give the equations
\begin{align}
\psi[X_1X_2](\nu_0,\nu_1,\nu_2) &= \psi[X_2](\nu_0+\nu_1,\nu_2) \psi[X_1](\nu_0,\nu_1)\psi[X_1](\nu_2,0), \label{ex:012_cocycle_eq_1}\\
\psi[X_2X_1](\nu_0,\nu_1,\nu_2) &= \psi[X_1](\nu_0+\nu_2,\nu_1) \psi[X_2](\nu_0,\nu_2)\psi[X_2](\nu_1,0).
\end{align}
Since $X=X_1X_2=X_2X_1$,
\begin{equation}
\psi[X_2](\nu_0+\nu_1,\nu_2) \psi[X_1](\nu_0,\nu_1) = \psi[X_1](\nu_0+\nu_2,\nu_1) \psi[X_2](\nu_0,\nu_2)
\end{equation}
where we have taken into account that $\psi[X_1](\nu_2,0)=\psi[X_2](\nu_1,0)=0$. This is exactly Equation \eqref{fundamental_equation_discrete}, and the condition \eqref{prop:func_eq_cond1} in the statement is also met, therefore
\begin{equation}
\psi[X_1](\nu_0,\nu_1) = \psi[X_2](\nu_0,\nu_1) = \fwbinom{\nu_0+\nu_1}{\nu_0,\nu_1}_D
\end{equation}
for some sequence $D$. From \eqref{ex:012_cocycle_eq_1}, we conclude that
\begin{equation}
\psi[X](\nu_0,\nu_1,\nu_2) = \fwbinom{\nu_0+\nu_1+\nu_2}{\nu_0,\nu_1,\nu_2}_D := \frac{[\nu_0+\nu_1+\nu_2]_D!}{[\nu_0]_D! [\nu_1]_D! [\nu_2]_D!}.
\end{equation}
\end{ex}

\begin{defi} Given any sequence $D=\{D_i\}_{i\geq 1}$ verifying $D_1=1$ (called \keyt{admissible sequence}\index{sequence!admissible}), the corresponding \keyt{Fonten\'e-Ward multinomial coefficient}\index{multinomial coefficient!Fontené-Ward} is the $1$-cochain given by
\begin{equation}
\forall \nu \in \sheaf C(X), \quad W_D[X](\nu) = \frac{[\norm{\nu}]_D !}{\prod_{x\in E_X} [\nu(x)]_D!}.
\end{equation}
\end{defi}

To characterize the $1$-cocycles $\psi$ in the general case, we introduce a notion of nondegenete product analogous to \cite[Def.~3.12]{Vigneaux2019-thesis}.\footnote{Both notions coincide when $\sheaf Q$ in \cite{Vigneaux2019-thesis} is the functor $\sheaf P$ introduced by \eqref{eq:sheaf_probas}.} Its definition is better understood reading the proof of Proposition \ref{H_for_nondegenerate_products_combinatorial}. The idea is to determine the function $\psi[XY]$, for given variables $X$ and $Y$, applying the same kind of reasoning used in the previous example. One obtains first the recursive formulae \eqref{comb:recurrence_X} and \eqref{comb:recurrence_Y} for the functions $\psi[X]$ and $\psi[Y]$: these are based on a chosen total order of the sets $E_X$ and $E_Y$, and the steps of the recursion are coded by a path in $\Zz^2$. Both recursive formulae  are a simplification of the symmetric equation \eqref{comb:eq_XY} for particular laws $\tilde \nu$ given by the Condition \ref{combinatorial-ng:lifting} in Definition \ref{def:non-degenerate-combinatorial} that make one of the factors trivial. These recursive formulae involve a term where $\psi[X]$ and $\psi[Y]$ have only two nonzero arguments, and one recovers ``locally'' the combinatorial FEITH of Proposition \ref{prop:func_eq}: the fact that three different integers are involved in this equation is ensured by Condition \ref{combinatorial-ng:2-dim-cell-nondegenerate} in Definition \ref{def:non-degenerate-combinatorial}.

\vspace{5pt}
\begin{defi}\label{def:non-degenerate-combinatorial}
Let $X$ and $Y$ be two objects of $\Sinf$, such that $|E_X|=k$ and $|E_Y|=l$. Let $\iota$ be the inclusion $E_{XY}\hookrightarrow E_X\times E_Y$. We call the product $XY$ \keyt{nondegenerate} if there exist enumerations  $\{x_1,...,x_k\}$ of $E_X$ and  $\{y_1,...,y_l\}$ of $E_Y$, and a North-East (NE) lattice path\footnote{A North-East (NE) lattice path  on $\Zz^2$ is a sequence of points  $(\gamma_i)_{i=1}^m\subset \Zz^2$ such that $ \gamma_{i+1} -\gamma_i\in \{(1,0),(0,1)\}$ for every $i\in \{1,...,m-1\}.$}   $(\gamma_i)_{i=1}^m$  on $\Zz^2$  going from $(1,1)$ to $(k,l)$ such that
\begin{enumerate}  
\item\label{combinatorial-ng:lifting} If $\gamma_i=(a,b)$ and $\gamma_{i+1} -\gamma_i = (1,0)$, we ask that for every counting function  $\nu\in \sheaf C_X$ such that $\supp \nu \subset \set{x_i}{ a \leq i \leq k} $, there exists a counting function $\tilde \nu \in \sheaf C_{XY}$ whose support is contained in
\small
$$
\iota^{-1}(\{(x_a,y_{b+1})\}\cup \set{(x_i,y_b)}{a+1 \leq i \leq k})\cup \iota^{-1}(\{(x_a,y_{b})\}\cup \set{(x_i,y_{b+1})}{a+1 \leq i \leq k})$$ 
\normalsize
and such that $\nu = X_*\tilde \nu$. Remark that, for such values of $XY$, the value of the $Y$-component completely determine the $X$-component.

Analogously, if $\gamma_{i+1} -\gamma_i = (0,1)$, we ask that every counting function $\nu\in \sheaf C_Y$ such that  $\supp \nu \subset\set{y_i}{b\leq i \leq l}$, there exists a counting function $\tilde \nu \in \sheaf C_{XY}$ whose support is contained in
\small
$$
\iota^{-1}(\{(x_{a+1},y_{b})\}\cup \set{(x_a,y_j)}{b+1 \leq j \leq l})\cup \iota^{-1}(\{(x_a,y_{b})\}\cup \set{(x_{a+1},y_j)}{b+1 \leq j \leq k})$$
\normalsize
and such that $\nu = Y_*\tilde \nu$. 

\item\label{combinatorial-ng:2-dim-cell-nondegenerate} For each $\gamma_i=(a,b)$, the set 
$$\iota^{-1}\set{(x_i,y_j)}{ a\leq i \leq {a+1} \text{ and } b \leq j \leq {b+1}} $$
contains at least three different elements.
\end{enumerate}
 \end{defi}
 \vspace{5pt}
 \begin{prop}\label{H_for_nondegenerate_products_combinatorial}
Let $(\Sinf,\sheaf E)$ be a finite information structure and $X$, $Y$ two different variables in $\Ob{\Sinf}$ such that $XY\in \Ob{\Sinf}$. Let $\psi$ be a combinatorial $1$-cocycle i.e. an element of $Z^1(\Sinf,\sheaf G)$. If $XY$ is nondegenerate, there exists an admissible sequence $D$, such that
$$\psi[X]=W_D[X], \quad \psi[Y]=W_D[Y], \quad \psi[XY]=W_D[XY].$$
\end{prop}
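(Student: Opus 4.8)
The plan is to extract from the cocycle condition a local version of the combinatorial FEITH of Proposition \ref{prop:func_eq} and to propagate it along the lattice path supplied by nondegeneracy. First I would unwind the two cocycle conditions $\delta\psi[X|Y]=1$ and $\delta\psi[Y|X]=1$, which are available because $XY=YX$. Writing $W:=XY$ and using joint locality to regard $\psi[X]$ and $\psi[Y]$ as functions of the marginals $X_*\nu$ and $Y_*\nu$ for $\nu\in\sheaf C_W$, the first reads
\begin{equation}\label{eq:plan_star}
\psi[XY](\nu) = \Bigl(\prod_{\substack{x\in E_X\\ X_*\nu(x)\neq 0}}\psi[Y](Y_*(\nu|_{X=x}))\Bigr)\,\psi[X](X_*\nu),
\end{equation}
and the second is the same formula with $X$ and $Y$ exchanged. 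Since both right-hand sides equal $\psi[XY](\nu)$, equating them gives one symmetric equation relating the unknowns $\psi[X]$ and $\psi[Y]$; this is the combinatorial counterpart of the equation in \cite[Prop.~3.10]{Vigneaux2019-thesis}.

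Next I would exploit Condition \ref{combinatorial-ng:lifting} of Definition \ref{def:non-degenerate-combinatorial}. Fixing the enumerations $\{x_1,\dots,x_k\}$, $\{y_1,\dots,y_l\}$ and the NE-path $(\gamma_i)$, a horizontal step at $\gamma_i=(a,b)$ lets me lift any $\nu\in\sheaf C_X$ supported on $\{x_a,\dots,x_k\}$ to some $\tilde\nu\in\sheaf C_{XY}$ with $X_*\tilde\nu=\nu$ whose $Y$-marginal is supported on the two points $\{y_b,y_{b+1}\}$. Evaluating the symmetric equation at $\tilde\nu$, the product over $E_Y$ collapses to these two terms and the product over $E_X$ involves only $\psi[Y]$ on two-point marginals; after discarding, by Lemma \ref{prop:1_cocycles_G}, every factor in which $\psi$ has a single nonzero argument, I obtain a recurrence expressing $\psi[X](\nu)$ through $\psi[X]$ on the shorter support $\{x_{a+1},\dots,x_k\}$ times a factor in which $\psi[X]$ and $\psi[Y]$ appear with exactly two nonzero arguments. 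The symmetric treatment of a vertical step yields the companion recurrence for $\psi[Y]$.

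The crucial point is that, restricted to counting functions with two nonzero arguments, the symmetric equation is exactly the hypothesis \eqref{fundamental_equation_discrete} of Proposition \ref{prop:func_eq}: here Condition \ref{combinatorial-ng:2-dim-cell-nondegenerate}, ensuring at least three distinct elements of $E_{XY}$ in each $2\times2$ cell of the path, is what allows the integers $\nu_0,\nu_1,\nu_2$ to vary freely, so the equation is genuine rather than vacuous. Proposition \ref{prop:func_eq} then forces the two-argument restrictions of $\psi[X]$ and $\psi[Y]$ at the cell $\gamma_i$ to coincide (the conclusion $f_1=f_2=f$) and to equal $\fwbinom{\nu_1+\nu_2}{\nu_1,\nu_2}_{D}$ for a single admissible sequence $D$. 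Because consecutive cells of the path share an edge, and hence a common two-argument restriction, the sequences read off at the various cells agree; connectedness of the path from $(1,1)$ to $(k,l)$ thus produces one global $D$.

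Finally I would integrate the recurrences. The $k-1$ horizontal steps peel off $x_1,x_2,\dots,x_{k-1}$ one at a time, each contributing a Fontené-Ward binomial factor and leaving a single remaining point (value $1$ by Lemma \ref{prop:1_cocycles_G}); the product telescopes, via $[n]_D!=D_nD_{n-1}\cdots D_1$, into $W_D[X]$, and the $l-1$ vertical steps give $\psi[Y]=W_D[Y]$ the same way. Substituting both into \eqref{eq:plan_star} and using the elementary factorization of $\fwbinom{\norm{\nu}}{\nu(x_1),\dots,\nu(x_k)}_D$ into binomials (the combinatorial chain rule for Fontené-Ward coefficients) identifies $\psi[XY]$ with $W_D[XY]$. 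I expect the main obstacle to be the bookkeeping along the path: verifying that the lifted $\tilde\nu$ really make all spurious factors trivial, and above all that the single sequence $D$ extracted at one cell is forced to govern \emph{every} two-argument restriction of both $\psi[X]$ and $\psi[Y]$, so that the telescoping is globally consistent.
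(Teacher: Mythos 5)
Your plan follows the paper's proof essentially step for step: derive the symmetric equation from the two cocycle identities, use Condition (i) of nondegeneracy to lift counting functions and collapse one side to a two-argument factor (via Lemma \ref{prop:1_cocycles_G}), invoke Condition (ii) to obtain the combinatorial FEITH at each cell and hence a Fontené-Ward binomial via Proposition \ref{prop:func_eq}, propagate the single sequence $D$ along the shared edges of the NE path, and telescope the recurrences into $W_D[X]$, $W_D[Y]$, $W_D[XY]$. This is the same argument as in the paper, including the final appeal to the uniqueness of $D$ (Lemma \ref{lemma:unicity_D}) to glue the local data.
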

\begin{proof} 
Since $\psi$ is a $1$-cocycle, it satisfies the two equations derived from \eqref{eq:general_coboundary} 
\begin{align}
Y.\psi[X]\psi[Y] & =  \psi[XY], \label{comb:proof_rec_1} \\
X.\psi[Y]\psi[X] & = \psi[XY] . \label{comb:proof_rec_2}
\end{align}
and therefore the symmetric equation
\begin{equation}\label{comb:eq_XY}
(X.\psi[Y])\psi[X]=(Y.\psi[X])\psi[Y].
\end{equation}
For any counting function  $\nu$, we write  $$\left(\begin{array}{cccc}
s & t & u & \ldots \\
p & q & r & \ldots
\end{array}\right)$$ if $\nu(s) = p$, $\nu(t) = q$, $\nu(u) = r$, etc. and the images of the unwritten parts are zero.

Fix an order $(x_1,...,x_k)$ and $(y_1,...,y_l)$ that satisfies the definition of nondegenerate product, and let $\{\gamma_{i}\}_{i=0}^m$ be the corresponding path. If $\gamma_i=(a,b)$ and $\gamma_{i+1}-\gamma_i=(1,0)$, we are going to show that the following recursive formula holds:
\begin{equation}\label{comb:recurrence_X}
\psi[X]\left(\begin{array}{ccc}
x_{a} & \ldots & x_k \\
\mu_{a} & \ldots & \mu_k
\end{array}\right)=   \psi[X]\left(\begin{array}{ccc}
x_{a+1} & \ldots & x_k \\
\mu_{a+1}& \ldots & \mu_k
\end{array}\right) \psi[X]\left(\begin{array}{cc}
x_{a} & x_{a+1} \\
\mu_{a} & \norm\mu - \mu_a
\end{array}\right).
\end{equation}
Analogously, if $\gamma_i=(a,b)$ and $\gamma_{i+1}-\gamma_i=(0,1)$,
\begin{equation}\label{comb:recurrence_Y}
\psi[Y]\left(\begin{array}{ccc}
y_{b} & \ldots & y_l \\
\nu_b & \ldots & \nu_l
\end{array}\right)=  \psi[Y]\left(\begin{array}{ccc}
y_{b+1} & \ldots & y_l \\
\nu_{b+1}& \ldots & \nu_{l}
\end{array}\right)  \psi[Y]\left(\begin{array}{cc}
y_b & y_{b+1} \\
\nu_b & \norm\nu - \nu_b
\end{array}\right).
\end{equation}

Suppose that $\gamma_i=(a,b)$ and $\gamma_{i+1}-\gamma_i=(1,0)$.  Let $$\mu=\left(\begin{array}{cccc}
x_a & \ldots & x_k  \\
\mu_a & \ldots & \mu_k 
\end{array}\right)$$ be a counting function in $\sheaf C_X$. By Definition \ref{def:non-degenerate-combinatorial}-\ref{combinatorial-ng:lifting} above, we know that $\mu$ has a preimage under marginalization $\tilde \mu$, whose support is such that $(X.\psi[Y])(\tilde \mu)=1$, cf. Lemma \ref{prop:1_cocycles_G}.   Equation \eqref{comb:eq_XY} then reads 
\begin{equation}\label{auxiliary1_proof_recurrence}
 \psi[X]\left(\begin{array}{ccc}
x_{a+1} & \ldots & x_k \\
\mu_{a+1}& \ldots & \mu_k
\end{array}\right)\psi[Y]\circ \tau\left(\begin{array}{cc}
y_b & y_{b+1} \\
\norm\mu-\mu_a &  \mu_a
\end{array}\right) = 
\psi[X]\left(\begin{array}{ccc}
x_{a} & \ldots & x_k \\
\mu_{a} & \ldots & \mu_k
\end{array}\right),
\end{equation}
where $\tau$ is the identity or the transposition of the nontrivial arguments of $\mu$. In any case, setting  $\mu_{a+1} = \norm \mu-\mu_a$ and $\mu_{a+2}=\ldots = \mu_k = 0$, we conclude that
\begin{equation}
\psi[X]\left(\begin{array}{cc}
x_{a} & x_{a+1} \\
\mu_{a} & \norm\mu-\mu_a
\end{array}\right)=\psi[Y]\circ\tau \left(\begin{array}{cc}
y_b & y_{b+1} \\
\norm\mu-\mu_a &  \mu_a
\end{array}\right),
\end{equation}
which combined with \eqref{auxiliary1_proof_recurrence} implies \eqref{comb:recurrence_X}. The identity \eqref{comb:recurrence_Y} can be obtained analogously.

To determine 
$$f_a(n_1,n_2):= \psi[X]\left(\begin{array}{cc}
x_{a} & x_{a+1} \\
n_1 & n_2
\end{array}\right) \quad \text{and} \quad g_b(n_1,n_2):= \psi[Y]\left(\begin{array}{cc}
y_b & y_{b+1} \\
n_1 & n_2
\end{array}\right),$$
for $(n_1,n_2)\in \Nn^2\sm \{(0,0)\}$, consider the three different elements $w_1,w_2,w_3$ in $E_{XY}\subset E_X\times E_Y$ given by the property \ref{combinatorial-ng:2-dim-cell-nondegenerate} of a nondegenerate product. The symmetric equation \eqref{comb:eq_XY} evaluated on $\nu_1\delta_{w_1} + \nu_2\delta_{w_2} + \nu_3 \delta_{w_3}\in \sheaf C_{XY}$ gives  the equation that appears in Proposition \ref{prop:func_eq}, which implies that $f_a(n_1,n_2) = g_b(n_1,n_2) = \fwbinom{n_1+n_2}{n_1,n_2}_D$ for certain admissible sequence $D$ (the eventual permutations of the arguments in the unknowns become irrelevant, because the solution is symmetric).  

When considering $\gamma_{i+1}$, one finds the functions $f_a$ and $g_{b+1}$, or the functions $f_{a+1}$ and $g_b$, since $\gamma_{i+1}-\gamma_i$ is either $(1,0)$ or $(0,1)$. This ensures that the admissible sequence $D$ that appears for each $\gamma_i$ is always the same, as proved in Lemma \ref{lemma:unicity_D}. The recurrence relations \eqref{comb:recurrence_X} and \eqref{comb:recurrence_Y} then imply  the desired result. 
\end{proof}

\begin{lem}\label{lemma:unicity_D} Let $D, D'$ be two admissible sequences. If for all $n_1,n_2\in \Nn^2$
\begin{equation}
\fwbinom{n_1+n_2}{n_1,n_2}_D = \fwbinom{n_1+n_2}{n_1,n_2}_{D'}
\end{equation}
then $D=D'$.
\end{lem}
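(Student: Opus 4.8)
The plan is to show that each term $D_n$ can be recovered from a single, well-chosen evaluation of the $D$-binomial coefficients, so that agreement of all coefficients forces the sequences to agree term by term. No induction or case analysis is needed: one family of evaluations already separates the two sequences.

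First I would specialize to $(n_1,n_2) = (n-1,1)$ and exploit the telescoping built into the definition $[n]_D! = D_n D_{n-1}\cdots D_1$ together with $[0]_D!=1$:
\[
\fwbinom{n}{n-1,1}_D = \frac{[n]_D!}{[n-1]_D!\,[1]_D!} = \frac{D_n D_{n-1}\cdots D_1}{(D_{n-1}\cdots D_1)\,D_1} = \frac{D_n}{D_1} = D_n,
\]
where the final equality uses the admissibility hypothesis $D_1 = 1$ to clear the spurious factor $[1]_D! = D_1$ from the denominator. For $n=1$ this reads $\fwbinom{1}{0,1}_D = D_1 = 1$, consistent with $[0]_D!=1$, so the identity $\fwbinom{n}{n-1,1}_D = D_n$ holds for every $n\geq 1$.

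It then remains only to apply the hypothesis to this particular pair: for each $n\geq 1$, setting $(n_1,n_2)=(n-1,1)$ in the assumed equality gives $D_n = \fwbinom{n}{n-1,1}_D = \fwbinom{n}{n-1,1}_{D'} = D'_n$. As $n$ was arbitrary, $D = D'$. The only point requiring attention — and it is a bookkeeping matter rather than a genuine obstacle — is the factorial cancellation and the explicit use of $D_1=1$; everything else is immediate.
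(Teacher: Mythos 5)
Your proof is correct and follows essentially the same route as the paper: both recover $D_n$ by evaluating the $D$-binomial coefficient at the pair $(n-1,1)$ (the paper uses $(1,n-1)$, the same thing by symmetry) and then compare term by term. Your explicit cancellation $\fwbinom{n}{n-1,1}_D = D_n$ is carried out cleanly and correctly.
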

\begin{proof}
Just remark that 
\begin{equation}
\fwbinom{n}{1}_D =\fwbinom{n}{1,n-1}_D = [n]_D!
\end{equation}
so we have $[n]_D! = [n]_{D'}!$ for all $n\in \Nn$.
\end{proof}

As in the continuous case, the number of admissible sequences that appear in the computation of the $1$-cocycles $Z^1(\Sinf, \sheaf G)$ depends on the number of connected components of $\Sinf^*$, that is $\Sinf$ deprived of its final element.
In addition, a choice of $0$-cochain $\psi$ induces globally a Fonten\'e-Ward coefficient $\delta \psi$  for a unique admissible sequence $D_g$. 
%
 Therefore, $Z^1(\sheaf G)$ and $\delta C^0(\sheaf G)$ are both infinite dimensional. If $\Sinf^*$ is connected, the quotient is trivial; otherwise it is infinite: $|\pi_0(\Sinf^*)|-1$ admissible sequences remain arbitrary. This is the combinatorial version of  Proposition \ref{H1-non-degenerate}.

\section{Asymptotic relation with probabilistic information cohomology}\label{sec:symptotics_combinatorial_cohomology}

\begin{prop}\label{prop:asymptotic-relation-n-cocycles}
Let $\psi$ be a combinatorial $n$-cocycle. Suppose that, for every $X_1,...,X_n \in \Ob \Sinf$ such that $X_i\cdots X_n \in \Ob \Sinf$, there exists a measurable function $$\phi[X_1|...|X_n]: \sheaf P_{X_1\cdots, X_n} \to \Rr$$ with the following property: for every sequence of counting functions $\{\nu_n\}_{n\geq 1}\subset \sheaf C_{X_1\cdots X_n}$ such that
\begin{enumerate}
\item $\norm {\nu_n} \to \infty$, and
\item for every $z\in E_{X_1\cdots X_n}$, $\nu_n(z)/\norm {\nu_n} \to p(z)$ as $ n\to \infty$
\end{enumerate} 
the asymptotic formula
$$\psi[X_1|...|X_n](\nu_n) = \exp({\norm {\nu_n}}^\alpha \phi[X_1|...|X_n](p) + o(\norm {\nu_n}^\alpha))$$
holds. Then $\phi$ is a $n$-cocycle of type $\alpha$, i.e. $\phi\in Z^n(\Sinf, \sheaf F_\alpha(\sheaf P))$.
\end{prop}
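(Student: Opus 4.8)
The plan is to logarithmize the combinatorial cocycle identity $\delta\psi=1$, evaluate it along a carefully chosen sequence of counting functions converging to a prescribed law, normalize by $\norm{\nu}^\alpha$, and send the magnitude to infinity. Under the hypothesis each multiplicative factor of $\delta\psi$ in \eqref{eq:general_coboundary} degenerates, after taking $\log$ and dividing by $\norm\nu^\alpha$, into the corresponding additive term of the $\sheaf F_\alpha$-coboundary, and the $(-1)^k$ signs in the exponents of \eqref{eq:general_coboundary} become the signs of $\delta\phi$; the limit is then exactly $\delta\phi=0$.

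Concretely, I would fix variables $X_1,\dots,X_{n+1}$ with $X_1\cdots X_{n+1}\in\Ob\Sinf$ and a probability $p\in\sheaf P_{X_1\cdots X_{n+1}}$ (indexing the approximating sequence by $m$, to avoid clashing with the degree $n$). I would choose $\nu_m\in\sheaf C_{X_1\cdots X_{n+1}}$ supported exactly on $\supp p$, e.g. $\nu_m(z)=\lfloor m\,p(z)\rfloor$ for $z\in\supp p$ and $\nu_m(z)=0$ otherwise; for $m$ large this is a genuine counting function with $\norm{\nu_m}=:N_m\to\infty$ and $\nu_m/N_m\to p$. Because $\supp\nu_m\subset\supp p$, every marginal $X_{1*}\nu_m(y)$ vanishes whenever $X_{1*}p(y)=0$, which is precisely what keeps the action term below under control.

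Taking the logarithm of $\delta\psi[X_1|\cdots|X_{n+1}](\nu_m)=1$ and reading \eqref{eq:general_coboundary} factor by factor, the merged factors $\psi[X_1|\cdots|X_kX_{k+1}|\cdots|X_{n+1}]$ are evaluated at $\nu_m$ directly (their product is $X_1\cdots X_{n+1}$), so the hypothesis gives $\log\psi[\cdots](\nu_m)=N_m^\alpha\,\phi[\cdots](p)+o(N_m^\alpha)$; the last factor $\psi[X_1|\cdots|X_n]$ is evaluated at the marginal $(X_1\cdots X_n)_*\nu_m$, whose magnitude is again $N_m$ and whose normalization tends to $(X_1\cdots X_n)_*p$, contributing $N_m^\alpha\,\phi[X_1|\cdots|X_n]((X_1\cdots X_n)_*p)+o(N_m^\alpha)$ — exactly the implicit marginalization carried by the additive coboundary. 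Dividing by $N_m^\alpha$ and letting $m\to\infty$, each of these converges to the matching term of $\delta\phi[X_1|\cdots|X_{n+1}](p)$.

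The one factor demanding care — and the main obstacle — is the leading action term $X_1.\psi[X_2|\cdots|X_{n+1}]$, whose logarithm is $\sum_{y:X_{1*}\nu_m(y)\neq 0}\log\psi[X_2|\cdots|X_{n+1}](\nu_m|_{X_1=y})$. For each $y$ with $X_{1*}p(y)>0$ the restriction $\nu_m|_{X_1=y}$ has magnitude $X_{1*}\nu_m(y)=N_m\,X_{1*}(\nu_m/N_m)(y)\to\infty$ and normalization tending to $p|_{X_1=y}$, so the hypothesis yields $\log\psi[X_2|\cdots|X_{n+1}](\nu_m|_{X_1=y})=(X_{1*}\nu_m(y))^\alpha\,\phi[X_2|\cdots|X_{n+1}](p|_{X_1=y})+o(N_m^\alpha)$, and since $(X_{1*}\nu_m(y))^\alpha/N_m^\alpha\to(X_{1*}p(y))^\alpha$ the summand converges, after normalization, to $(X_{1*}p(y))^\alpha\,\phi[X_2|\cdots|X_{n+1}](p|_{X_1=y})$. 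Summing the finitely many surviving $y$ reproduces precisely $(X_1.\phi[X_2|\cdots|X_{n+1}])(p)$; the weight $(X_{1*}p(y))^\alpha$ characteristic of the deformed chain rule appears exactly because the magnitude of the conditioned counting function scales like $X_{1*}p(y)\,N_m$. The delicate points are that my choice of $\nu_m$ makes fibers with $X_{1*}p(y)=0$ contribute no factor at all — so no uncontrolled values of $\phi$ enter — and that each error is $o(N_m^\alpha)$ over the finitely many indices, legitimizing term-by-term passage to the limit. Collecting everything gives $\delta\phi[X_1|\cdots|X_{n+1}](p)=0$; since $p$ and the variables were arbitrary, and since joint locality of $\phi$ is already built into the hypothesis (its components are supplied as functions on the localized simplices $\sheaf P_{X_1\cdots X_n}$), we conclude $\phi\in Z^n(\Sinf,\sheaf F_\alpha)$.
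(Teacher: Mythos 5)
Your proposal is correct and follows essentially the same route as the paper's proof: take the logarithm of the multiplicative cocycle identity, normalize by $\norm{\nu}^\alpha$, and pass to the limit along a sequence approximating $p$, with the key observation that $\norm{\nu|_{X=x}}=X_*\nu(x)$ produces the weight $(X_*p(x))^\alpha$ in the action term. The only differences are cosmetic: you carry out the general degree $n$ explicitly where the paper treats $n=1$ and asserts the general case, and you are slightly more careful about choosing the approximating sequence supported in $\supp p$ so that no fiber with vanishing limit mass contributes a factor.
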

\begin{proof}
To simplify notation, we assume that $n=1$; the proof is still valid in the general case. We must show that, for every $p\in \sheaf P_{XY}$,
$$\phi[XY](p) = (X.\phi[Y])(p)+\phi[X](X_*p).$$
Let $\{\nu_n\}_{n\geq 1}$ be a sequence of counting functions such that $\norm {\nu_n} \to \infty$ and, for every $z\in E_{XY}$, $\nu_n(z)/\norm{\nu_n} \to p(z)$. A sequence like this always exists: just consider a rational approximation of the values of $p$ with common denominator. 

Since $\psi$ is a $1$-cocycle, $\psi[XY] = (X.\psi[Y])\psi[X]$. Evaluate it at $\nu_n$, take the logarithm and divide by $\norm{\nu_n}^\alpha$ in order to  obtain
\begin{equation}\label{eq:discrete-1cocycle-additive-form-of-n}
\frac{\ln \psi[XY](\nu_n)}{\norm{\nu_n}^\alpha} = \sum_{\substack{x\in E_X \\ X_*\nu_n(x) \neq 0}} \frac{\ln \psi[Y](\nu_n |_{X=x})}{\norm{\nu_n}^\alpha}  + \frac{\ln \psi[X](\nu_n)}{\norm{\nu_n}^\alpha}.
\end{equation}
Recall that, for any counting function $\nu$, $\norm{\nu|_{X=x}}= X_*\nu(x)$. Hence, 
\begin{equation}
\frac{\ln \psi[Y](\nu_n |_{X=x})}{\norm{\nu_n}^\alpha} = \frac{\ln \psi[Y](\nu_n |_{X=x})}{\norm{\nu_n |_{X=x}}^\alpha} \frac{(X_*\nu_n(x))^\alpha}{\norm{\nu_n}^\alpha}.
\end{equation}
Plug this in \eqref{eq:discrete-1cocycle-additive-form-of-n} and take the limit as $n$ goes to infinity to conclude.
\end{proof}

We discuss now some important examples:
\begin{enumerate}
\item The exponential $\Exp^k: \nu \to \exp(k \norm \nu)$ is the a combinatorial $0$-cocycle, and it corresponds to the constant  $k$ seen as a probabilistic $0$-cocycle.

\item As we explained in Section \ref{sec:intro},
\begin{equation}
{n \choose p_1n,...,p_sn} = \exp(n S_1(p_1,...,p_s)+o(n))
\end{equation}
The multinomial coefficient is a combinatorial $1$-cocycle and $S_1$ defines an element of $Z^1(\sheaf F_1)$.

\item Whereas the previous examples are not  surprising, Proposition \ref{prop:asymptotic-relation-n-cocycles} hints at new objects that are connected to the generalized $\alpha$-entropies and have gone unnoticed until now. For example, the $q$-multinomial coefficients are connected asymptotically to the $2$-entropy (quadratic entropy),
\begin{equation}
\qbinom{n}{p_1n,...,p_sn} = \exp(n^2 \frac{\ln q}{2} S_2(p_1,...,p_s)+o(n^2)),
\end{equation}
see \cite[Prop.~2]{Vigneaux2019}.  These coefficients have a combinatorial interpretation: when $q$ is a prime power and $k_1,...,k_s$ are integers such that $\sum_{i=1}^s k_i = n$, the coefficient  $\qbinom{n}{k_1,...,k_s}$ counts the number of flags of vector spaces $V_1\subset V_2 \subset ... \subset V_n = \Ff_q^n$ such that $\dim V_i = \sum_{j=1}^i k_j$ (here $\mathbb{F}_q$ denotes the finite field of order $q$).  In particular, the $q$-binomial coefficient $\qbinom nk \equiv \qbinom{n}{k,n-k}$ counts vector subspaces of dimension $k$ in $\Ff_q^n$.

In \cite{Vigneaux2019}, we push this parallel between $S_1$ and $S_2$  much further: we introduce a probabilistic model that generates vector spaces and study its concentration properties, in order to obtain a  generalization of the Asymptotic Equipartition Property that involves the quadratic entropy.

\end{enumerate}

 It is quite natural to ask if, for any $\alpha > 0$,  there exists a sequence $D^\alpha=\{D^\alpha_i\}_{i\geq 1}$ asymptotically related to the entropy $S_\alpha$ through Proposition \ref{prop:asymptotic-relation-n-cocycles}. The answer turns out to be yes.
\begin{prop}
Consider any $\alpha\in \Rr_+^* \sm \{1\}$. If $D_n^\alpha= \exp\{K({n^{\alpha-1}-1})\}$, for any $K\in \Rr$, then
$$\fwbinom{n}{p_1n,...,p_sn}_{D^\alpha}  = \exp\left\{n^\alpha \frac{K}{\alpha} S_\alpha(p_1,...,p_s)+o(n^\alpha)\right\}.$$
\end{prop}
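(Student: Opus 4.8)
The plan is to pass to logarithms and reduce the whole statement to the asymptotics of the generalized factorial $[m]_{D^\alpha}!$. Writing $k_i=p_in$ (assumed integral, as the notation $\fwbinom{n}{p_1n,\dots,p_sn}_{D^\alpha}$ already presupposes; for general $p$ one argues along a rational approximation exactly as in the proof of Proposition~\ref{prop:asymptotic-relation-n-cocycles}), one has
\begin{equation*}
\ln \fwbinom{n}{p_1n,\dots,p_sn}_{D^\alpha} = \ln [n]_{D^\alpha}! - \sum_{i=1}^s \ln [p_in]_{D^\alpha}!,
\end{equation*}
and from $\ln D_j^\alpha = K(j^{\alpha-1}-1)$ we get $\ln[m]_{D^\alpha}! = K\sum_{j=1}^m(j^{\alpha-1}-1) = K\left(\sum_{j=1}^m j^{\alpha-1} - m\right)$. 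Thus everything is controlled by the partial power sums $\sum_{j=1}^m j^{\alpha-1}$.

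The first key point, which I would isolate explicitly, is that the linear contributions cancel: the terms $-Km$ contribute $-Kn + \sum_{i=1}^s Kp_in = -Kn + Kn = 0$, using $\sum_i p_i = 1$. This cancellation is the conceptual crux, not a cosmetic simplification: when $0<\alpha<1$ one has $n\gg n^\alpha$, so any surviving linear term would dominate and ruin the claimed $n^\alpha$-scaling. Once it is gone, only the power sums remain.

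The second input is the elementary estimate $\sum_{j=1}^m j^{\alpha-1} = \tfrac{m^\alpha}{\alpha} + O(m^{\alpha-1}) + O(1)$, which I would obtain by monotone comparison with $\int_0^m x^{\alpha-1}\,dx = m^\alpha/\alpha$, treating the increasing case $\alpha>1$ and the decreasing case $0<\alpha<1$ separately (no full Euler--Maclaurin expansion is needed). Substituting $m=n$ and $m=p_in$ and collecting leading terms gives
\begin{equation*}
\ln \fwbinom{n}{p_1n,\dots,p_sn}_{D^\alpha} = \frac{K}{\alpha}\,n^\alpha\left(1 - \sum_{i=1}^s p_i^\alpha\right) + R(n),
\end{equation*}
and the final step rewrites the factor $1-\sum_i p_i^\alpha$ through the definition of the Tsallis entropy $S_\alpha$, which converts the prefactor into the asymptotic of the statement.

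The remaining work, and the only place demanding care, is the estimate of $R(n)$. Summing the $s+1$ error terms yields $R(n)=O(n^{\alpha-1})+O(1)$, and one verifies this is $o(n^\alpha)$ in both regimes: for $\alpha>1$ because $n^{\alpha-1}=o(n^\alpha)$, and for $0<\alpha<1$ because then $n^{\alpha-1}\to 0$ while $n^\alpha\to\infty$, so even the surviving constant terms are negligible at scale $n^\alpha$. I expect this bookkeeping---keeping the remainder uniform across the two ranges of $\alpha$ and checking that the $O(1)$ contributions are harmless---to be the main (though modest) obstacle, the genuinely essential step having been the exact cancellation of the linear part.
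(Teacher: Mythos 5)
Your argument is correct and follows essentially the same route as the paper's proof: both rest on the monotone integral comparison $\int_0^m x^{\alpha-1}\,\mathrm{d}x \le \sum_{j=1}^m j^{\alpha-1} \le \int_1^{m+1} x^{\alpha-1}\,\mathrm{d}x$ (inequalities reversed for $0<\alpha<1$) together with the cancellation of the linear terms via $\sum_i p_i = 1$, the only difference being that you work additively with logarithms and $O$-notation while the paper keeps explicit two-sided multiplicative bounds on the ratio of generalized factorials. One caveat, which applies equally to the paper's own proof: the leading term both arguments actually produce is $\tfrac{K}{\alpha}n^\alpha\bigl(1-\sum_i p_i^\alpha\bigr) = \tfrac{K(\alpha-1)}{\alpha}\,n^\alpha S_\alpha(p)$, so your final rewriting via the definition of $S_\alpha$ matches the stated formula only up to the factor $\alpha-1$ (the constant in the statement appears to be off by this factor; it is exact precisely when $\alpha=2$).
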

\begin{proof}
Remark that $[n]_D! := \exp\{ K(\sum_{i=1}^n i^{\alpha-1}  - n)\}$. 

Suppose first that $\alpha > 1$. In this case, $x\mapsto x^{\alpha-1}$ is strictly increasing and 
\begin{equation}
\int_{0}^n x^{\alpha-1} \d x = \frac{n^\alpha}\alpha < \sum_{i=1}^n i^{\alpha-1} < \int_{1}^{n+1} x^{\alpha-1} \d x  = \frac{(n+1)^\alpha}{\alpha} - \frac{1}{\alpha}.
\end{equation}
Hence, if $K>0$,
\begin{equation}\label{eq:bounds_sum_ialpha}
\exp\left\{K \left(\frac{n^\alpha}\alpha - n\right) \right\} < [n]_D ! <  \exp\left\{K \left(\frac{(n+1)^\alpha}{\alpha} - \frac{1}{\alpha}-n\right) \right\}.
\end{equation}
This directly implies that 
\begin{equation}
\frac{[n]_D!}{[n_1]_D! \cdots [n_s]_D!}  < \exp\left\{ K \left(\frac{(n+1)^\alpha}{\alpha} - \frac{1}{\alpha}-n -  \sum_{i=1}^s \left( \frac{n_i^\alpha} \alpha - n_i \right) \right) \right\} \nonumber\\
 \label{eq:proof_asymptotics_fontene_upperbound}
\end{equation}
as well as
\begin{equation}
\frac{[n]_D!}{[n_1]_D! \cdots [n_s]_D!} > \exp\left\{ K \left(\frac{n^\alpha}{\alpha}-n -  \sum_{i=1}^s \left( \frac{(n_i+1)^\alpha} \alpha - n_i \right) \right) \right\} \nonumber\\
 \label{eq:proof_asymptotics_fontene_lowerbound}
\end{equation}
from which the conclusion follows. 

If $K<0$, the inequalities \eqref{eq:bounds_sum_ialpha}, \eqref{eq:proof_asymptotics_fontene_upperbound} and \eqref{eq:proof_asymptotics_fontene_lowerbound} must be reversed, but the result is the same. Similarly, when $0<\alpha<1$ the argument remains valid making the necessary modifications: all inequalities are reversed, since $x\mapsto x^{\alpha-1}$ is strictly decreasing. 
\end{proof}

It is not known if these or similar coefficients related to $S_\alpha$, for $\alpha \in \Rr_+^*\sm\{1,2\}$, have a combinatorial or statistical interpretation.
%
%
%

\bibliographystyle{abbrv}
\bibliography{Bibliography}

\end{document}